\documentclass{amsart}
\usepackage{amsmath,amssymb,graphicx,latexsym}
\usepackage[usenames,dvipsnames]{color}
\usepackage{stmaryrd}		% \llbracket and \rrbracket
\usepackage{hyperref}

\graphicspath{{figures/}}

\newtheorem{theorem}{Theorem}

\newtheorem{lemma}[theorem]{Lemma}
\newtheorem{proposition}[theorem]{Proposition}
\newtheorem*{theorem*}{Theorem}

\theoremstyle{definition}

\newtheorem*{example*}{Example}

\newtheorem*{definition*}{Definition}
\newtheorem*{notation*}{Notation}

\DeclareMathOperator{\total}{total}

\newcommand{\N}{\mathbb{N}}

\newcommand{\colonequal}{\mathrel{\mathop:}=}
\newcommand{\mytexttilde}{\raise.17ex\hbox{$\scriptstyle\sim$}}

\newcommand{\seq}[1]{\cite[\href{http://oeis.org/#1}{#1}]{OEIS}}

\begin{document}

\title[Universal Matrices for Counting $C$-nomial Coefficients]{Universal Matrices for Counting \\ Fibonomial and $C$-nomial Coefficients\\ by their $p$-adic Valuations}

\author{Arav Chand}
\email{
aravchando8@gmail.com
}
\begin{abstract}
Rowland found a matrix product formula for generating functions counting binomial coefficients by their $p$-adic valuations. A natural generalization of binomial coefficients was introduced by Knuth and Wilf defined by a sequence~$C$. We obtain analogous matrix product formulas counting these $C$-nomial coefficients when $C$ is a strong divisibility sequence. Surprisingly, the matrices are universal in the sense that they are independent of $C$. We further extend this product to $C$-multinomial coefficients.
\end{abstract}

\maketitle

\section{Introduction}\label{section:introduction}

Pascal's triangle has been studied by mathematicians for hundreds of years, especially for its numerous number theoretic properties. Fine~\cite{Fine1947} studied the divisibility of the entries on the $n$th row of Pascal's triangle by a prime $p$. Specifically, let $F_p(n)$ be the number of integers $m$ in the range $0 \leq m \leq n$ such that $\binom{n}{m}$ is not divisible by $p$, and let $n = n_{\ell}\cdots n_1n_0$ be the standard base-$p$ representation of $n$. Fine  showed that $$F_p(n)=(n_0+1)(n_1+1)\cdots(n_\ell+1).$$

Many authors have since been interested in generalizing Fine's theorem in various ways, such as counting $p$-adic valuations greater than $0$. One way this has been studied is through the generating function
$$
T_p(n, x) = \sum_{m=0}^n x^{\nu_p \left( \binom{n}{m} \right)}.
$$
This polynomial encodes the $p$-adic valuations of binomial coefficients, where $\nu_p(n)$ denotes the exponent of the highest power of $p$ dividing $n$. The coefficient on $x^i$ in $T_p(n, x)$ counts the number of $ m $ for which $ \nu_p\!\left( \binom{n}{m} \right) = i $. 

Rowland~\cite{Rowland2018} proved a matrix product formula for $T_p(n,x)$. For each $d \in \{0, 1, \dots, p~-~1\}$, let 
\[
M_p(d) = \begin{bmatrix}
d + 1 & p - d - 1 \\
dx & (p - d)x
\end{bmatrix}.
\]
Rowland showed 
\[
T_p(n, x) = \begin{bmatrix} 1 & 0 \end{bmatrix} M_p(n_0) M_p(n_1) \cdots M_p(n_\ell) \begin{bmatrix} 1 \\ 0 \end{bmatrix}.
\]
In this paper, we generalize this matrix product formula to analogues of binomial coefficients defined with respect to strong divisibility sequences. 

A sequence $C = C_1, C_2, \dots$ of nonzero integers is a \emph{strong divisibility sequence} if it satisfies \[
\gcd(C_n, C_m) = C_{\gcd(n, m)}
\] for all positive $n$ and $m$.
For example, the Fibonacci sequence $$(F_n)_{n\geq1}\colon 1, 1, 2, 3, 5, 8, 11, \dots$$ is an example of a strong divisibility sequence. Furthermore, all Lucas sequences $(U_n)_{n\geq1}$ that have relatively coprime $U_2$ and $U_3$ are also strong divisibility sequences. 

\begin{notation*}
For a strong divisibility sequence $C$, define the \emph{$ C $-orial} by $0!_C=1$ and $n!_C = C_n C_{n-1} \cdots C_1$ for positive $n$, and define the \emph{$ C $-nomial coefficient} by
\[
\binom{n}{m}_C = \frac{n!_C}{m!_C (n - m)!_C}
\] for non-negative $n$ and $m$. Knuth and Wilf~\cite{KnuthWilf1989} proved that $\binom{n}{m}_C$ is an integer.
Furthermore, let $ T_{p,C}(n,x) $ denote the generating polynomial for the $ p $-adic valuations of $ C $-nomials. 
That is, 
\[
T_{p,C}(n,x) = \sum_{m=0}^n x^{\nu_p\!\left( \binom{n}{m}_C \right)}.
\]
The coefficient on $x^i$ in $T_{p,C}(n,x)$ counts the number of $m$ for which $\nu_p\!\left(\binom{n}{m}_C\right)=i$.
\end{notation*}
We introduce some more notation to state the main theorems. For a strong divisibility sequence $C$, we define the \emph{rank of apparition} $ \alpha(m) $ to be the index of the first term in $ C $ divisible by $ m $, if such a term exists. 
If $\alpha(p)$ is not defined, the generating polynomial is trivially $T_{p,C}(n,x)=n+1$.
When $\alpha(p^k)$ is defined for all $k \geq 1$, we will also be interested in the sequence of ratios $(a_k(p))_{k\geq1}$ defined by
\[
a_1(p) := \alpha(p), \qquad a_2(p) := \frac{\alpha(p^2)}{\alpha(p)}, \qquad a_3(p) := \frac{\alpha(p^3)}{\alpha(p^2)}, \qquad \dots.
\] 
To give some intuition regarding this sequence, a value of $a_2(p)=1$ means the appearance of the factor $p^2$ in the sequence $C$ is at the same index as the appearance of the factor $p$. A value of $a_3(p)=p$ means the first term that divides $p^3$ is at an index exactly $p$ times the index of the first term that divides $p^2$. We use this ratio sequence to define three classes of primes.

\begin{definition*}
Let $p$ be a prime and $C$ a strong divisibility sequence such that $\alpha(p^k)$ is defined for all $k \geq 1$. Following Knuth and Wilf~\cite{KnuthWilf1989}, the prime $p$ is \emph{ideal} for $C$ if there exists an integer $s(p) \geq 1$ such that the sequence of ratios $(a_k(p))_{k \geq 2}$ satisfies
\[
a_k(p) =
\begin{cases}
1 & \text{if } 2 \leq k \leq s(p) \\
p & \text{if } k > s(p).
\end{cases}
\]
We say $p$ is \textit{acceptable} if there exists an integer $s(p) \geq 1$ such that \begin{align*}
a_k(p) &\in \N \quad \text{if } 2 \leq k \leq s(p) \\
a_k(p) &= p \quad \text{if } k > s(p).
\end{align*}
A prime is \emph{unacceptable} if it is not acceptable.
Note that every ideal prime is acceptable.
We mention that $s(p)$ is in fact $\nu_p\!\left(C_{\alpha(p)}\right)$. 
\end{definition*}

\begin{example*}
Take the Lucas sequence defined by $U_1=1$, $U_2=5$, and $U_n=5U_{n-1}+2U_{n-2}$ for $n > 2$. We choose this sequence as it demonstrates examples of ideal primes well. Consider $p=7$. We have $\alpha(7)=8$ since the first term divisible by $7$ is $U_8=120785=7^2\cdot2465$. This first term is also divisible by the next power of $p$, $7^2=49$, making $\alpha(7^2)=8$. Consequently, the ratio between them makes $a_2(7)=1$. The next power of $p$, $7^3=343$, first divides the term at the index $p\alpha(p^2)=7\cdot8=56$. For larger values of $k$, it follows from an exercise of Knuth~\cite[Exercise 3.2.2-11]{TAOCP} that $7^k$ first divides the term at index $7\alpha(7^{k-1})$, making the sequence $$(\alpha_k(7))_{k\geq1}=8,1,7,7,7,\ldots.$$
Since the last index for which $a_k(7)\neq7$ is $k=2$, we have $s(7) = 2$. Furthermore, because the ratios only take on the value $1$ for $2 \leq k \leq 2 = s(7)$, then $p=7$ is an ideal prime for $U$.
\end{example*}

\begin{example*}
To illustrate the generalization that acceptable primes present, consider $p=2$ in the Fibonacci sequence. With a similar analysis as the previous example, the sequence of ratios is $$(a_k(2))_{k\geq1}=3,2,1,2,2,2,\ldots.$$
Since the last index for which  $a_k(2)\neq2$ is $k=3$, we have $s(2)=3$. However, the ratios take on positive integer values for $2 \leq k \leq 3=s(2)$ that are not all $1$ like for the ideal primes. Thus, $p=2$ is an acceptable prime for the Fibonacci sequence.
\end{example*}

For the Lucas sequences, we can fully characterize the primes. All odd primes are ideal ~\cite[Exercise 3.2.2-11]{TAOCP}. The sole even prime, $p=2$, can be shown to be ideal whenever $U_2$ is even, or when $U_2$ odd and $U_3 \equiv 0 \pmod 4$. Else, $p=2$ is only an acceptable prime for the sequence $U$. Notably, there are no unacceptable primes for any Lucas sequence.

The matrix product formulas for the generating functions counting $C$-nomials are substantially simpler for ideal primes than acceptable primes. We thus state these separately.
In the ideal prime case, this initial vector can be stated explicitly. 

\begin{theorem}\label{thm:ideal_cnomial}
Let $ C $ be a strong divisibility sequence, and let $ p $ be an ideal prime for $ C $. Then for any $ n \geq 0 $, with the standard base-$p$ representation of $n = n_{\ell}\cdots n_1n_0$, and for any $ 0 \leq r < \alpha(p) $, we have 
\[
T_{p,C}(\alpha(p) \cdot n + r, x)
= \begin{bmatrix}
    r+1&(\alpha(p) - r - 1)x^{s(p) - 1}
\end{bmatrix} M_p(n_0) M_p(n_1) \cdots M_p(n_\ell) \begin{bmatrix} 1 \\ 0 \end{bmatrix}.
\]
\end{theorem}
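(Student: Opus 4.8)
The plan is to reduce everything to the ordinary binomial generating polynomial $T_p$ and then to read off \emph{both} coordinates of Rowland's matrix product. Writing $\alpha = \alpha(p)$ and $s = s(p)$, I will first establish the valuation reduction
\[
T_{p,C}(\alpha n + r, x) = (r+1)\,T_p(n,x) + (\alpha - 1 - r)\,x^{s+\nu_p(n)}\,T_p(n-1,x),
\]
and then prove the companion to Rowland's theorem: that the two coordinates of $M_p(n_0)\cdots M_p(n_\ell)\begin{bmatrix}1\\0\end{bmatrix}$ are $T_p(n,x)$ and $x^{1+\nu_p(n)}T_p(n-1,x)$. Multiplying the row vector $[\,r+1,\ (\alpha-r-1)x^{s-1}\,]$ into this column and using $x^{s-1}\cdot x^{1+\nu_p(n)} = x^{s+\nu_p(n)}$ then yields the theorem (here $s \ge 1$, so $x^{s-1}$ is a genuine polynomial).

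First I would compute $p$-adic valuations of $C$-orials. From the ideal hypothesis, $\alpha(p^k) = \alpha$ for $1 \le k \le s$ and $\alpha(p^k) = p^{k-s}\alpha$ for $k \ge s$, combined with $p^k \mid C_j \iff \alpha(p^k)\mid j$, this gives $\nu_p(C_j) = 0$ when $\alpha \nmid j$ and $\nu_p(C_j) = s + \nu_p(j/\alpha)$ when $\alpha \mid j$. Summing over $j \le n$ produces $\nu_p(n!_C) = s\floor{n/\alpha} + \nu_p(\floor{n/\alpha}!)$. Writing $N = \alpha n + r$ and $m = \alpha b + t$ with $0 \le t < \alpha$, the only subtlety in $\nu_p\binom{N}{m}_C = \nu_p(N!_C) - \nu_p(m!_C) - \nu_p((N-m)!_C)$ is whether $\floor{(N-m)/\alpha}$ equals $n - b$ or $n - b - 1$, which is governed by the single comparison $t \le r$ versus $t > r$. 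A short case analysis then gives $\nu_p\binom{N}{m}_C = \nu_p\binom nb$ when $t \le r$ and $\nu_p\binom{N}{m}_C = s + \nu_p(n) + \nu_p\binom{n-1}{b}$ when $t > r$ (using $n!/(b!(n-b-1)!) = n\binom{n-1}{b}$). Summing $x^{\nu_p(\cdots)}$ over all $m$, the $t \le r$ terms assemble into $(r+1)T_p(n,x)$ and the $t > r$ terms into $(\alpha-1-r)x^{s+\nu_p(n)}T_p(n-1,x)$, establishing the reduction.

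The heart of the argument is the coordinate identification, which I would obtain from the master recursion
\[
T_p(n,x) = (n_0+1)T_p(n',x) + (p - n_0 - 1)\,x^{1+\nu_p(n')}T_p(n'-1,x), \qquad n = pn' + n_0.
\]
To prove it, I split the sum defining $T_p(n,x)$ by whether a borrow occurs in the lowest base-$p$ digit of $n - m$; the no-borrow terms give $(n_0+1)T_p(n',x)$ directly from Kummer's carry description, while the borrow terms reduce, after an index shift, to the quantity $R(n'-1) := \sum_{a+b = n'-1} x^{c_1(a,b)}$, where $c_1$ counts carries in the base-$p$ addition $a + b$ with a forced carry into the units position. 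The key lemma is that this extra carry inflates the ordinary carry count $c_0(a,b) = \nu_p\binom{a+b}{a}$ by exactly the number of trailing $(p-1)$-digits of $a + b$, which equals $\nu_p(a+b+1)$ and is therefore \emph{independent} of the decomposition; hence $R(n'-1) = x^{\nu_p(n')}T_p(n'-1,x)$, completing the master recursion. I expect this uniformity of the carry inflation to be the main obstacle, since a priori $c_1 - c_0$ depends on the individual pair $(a,b)$ and only collapses to $\nu_p(n')$ after the trailing-digit observation. Taking Rowland's theorem for the first coordinate, the matrix recursion's second row reads $v_2(n) = n_0 x\,T_p(n',x) + (p-n_0)x\,v_2(n')$; a direct check (splitting on $n_0 = 0$ versus $n_0 \ge 1$ and invoking the master recursion applied to $n-1$) shows that $x^{1+\nu_p(n)}T_p(n-1,x)$ satisfies this same recurrence with matching base value $0$ at $n=0$, so the two agree by induction on the number of digits.

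Finally I would assemble the pieces as indicated in the first paragraph, checking the degenerate case $n = 0$ separately: there $N = r < \alpha$ forces $\nu_p\binom{r}{m}_C = 0$ for all $m$, so $T_{p,C}(r,x) = r+1$, which matches the empty matrix product $[\,r+1,\ (\alpha-r-1)x^{s-1}\,]\begin{bmatrix}1\\0\end{bmatrix}$.
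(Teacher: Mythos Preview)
Your proof is correct and follows essentially the same architecture as the paper: compute $\nu_p(n!_C)=s\lfloor n/\alpha\rfloor+\nu_p(\lfloor n/\alpha\rfloor!)$, split the sum defining $T_{p,C}(\alpha n+r,x)$ into the cases $t\le r$ and $t>r$ to obtain the linear combination $(r+1)T_p(n,x)+(\alpha-r-1)x^{s-1}\cdot x^{1+\nu_p(n)}T_p(n-1,x)$, and identify these two summands with the coordinates of $M_p(n_0)\cdots M_p(n_\ell)\begin{bmatrix}1\\0\end{bmatrix}$. The only substantive difference is that the paper simply \emph{cites} Rowland for the coordinate identification $\bigl(T_p(n,x),\,x^{1+\nu_p(n)}T_p(n-1,x)\bigr)$, whereas you reprove it from scratch via Kummer's theorem and your carry-propagation lemma $c_1(a,b)-c_0(a,b)=\nu_p(a+b+1)$; this makes your argument more self-contained but is otherwise the same route.
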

In particular, the matrix product on the right side does not depend on $C$ except for the leftmost vector.

\begin{example*}
    Consider the Lucas sequence used above, defined by $U_1=1$, $U_2=5$, and $U_n=5U_{n-1}+2U_{n-2}$ for $n > 2$, and consider $p=7$. We calculate 
    $T_{7, U}(12,x)$ 
    in two ways. By its definition, the polynomial is generating from counting 
    $\binom{12}{m}_U$ 
    by their $7$-adic valuations for 
    $0 \leq m \leq 12$. 
    These coefficients and their $7$-adic valuations are \begin{center} 
    \begin{tabular}{c|r|c}
    $m$ & $\binom{12}{m}_U$ & $\nu_7\!\left(\binom{12}{m}_U\right)$ \\
    \hline
    0  & 1                                          & 0 \\
    1  & 100611585                                  & 0 \\
    2  & 376848881400519                            & 0 \\
    3  & 48655726732749786925                       & 0 \\
    4  & 217739746036832703362415                   & 0 \\
    5  & 33760841110473476348689725                 & 2 \\
    6  & 181372757631248546893092255                & 2 \\
    7  & 33760841110473476348689725                 & 2 \\
    8  & 217739746036832703362415                   & 0 \\
    9  & 48655726732749786925                       & 0 \\
    10 & 376848881400519                            & 0 \\
    11 & 100611585                                  & 0 \\
    12 & 1                                          & 0 \\
    \end{tabular}
    \end{center}
so $T_{7,U}(12,x)=10+3x^2$. 

Our matrix product provides an alternative and more computationally efficient way to find this polynomial. Since $p = 7$ is ideal, we use Theorem~\ref{thm:ideal_cnomial}. The matrices for $p=7$ are $M_7(0)$, $M_7(1)$, $\dots$, $M_7(6)$ which compute to 
$$M_7(0)=\begin{bmatrix}1&6\\0&7x\end{bmatrix}, \quad
M_7(1)=\begin{bmatrix}2&5\\x&6x\end{bmatrix},\quad \dots , \quad 
M_7(6)=\begin{bmatrix}7&0\\6x&x\end{bmatrix}.
$$ 
Recall for this Lucas sequence, $s(7)=2$ and $\alpha(p)=\alpha(7)=8$. Therefore, writing 
$12$ in the form $12=\alpha(p)\cdot n+r$ shows $n=1$ and $r=4$. 
Since $n=1=1_7$, the partial product excluding the leftmost vector evaluates to $$M_7(1)\begin{bmatrix}1\\0\end{bmatrix}=\begin{bmatrix}2&5\\x&6x\end{bmatrix}\begin{bmatrix}1\\0\end{bmatrix}=\begin{bmatrix}2\\x\end{bmatrix}.$$ 
The leftmost vector evaluates as $$\begin{bmatrix}
    r+1&(\alpha(p) - r - 1)x^{s(p) - 1}
\end{bmatrix}=\begin{bmatrix}
    5&3x
\end{bmatrix}.$$
Multiplying this by the partial product gives $$\begin{bmatrix}
    5&3x
\end{bmatrix}\begin{bmatrix}2\\x\end{bmatrix}=10+3x^2,$$ which exactly matches the $T_{7,U}(12,x)$ we calculated before. 
\end{example*}

\begin{example*}
    Elliptic divisibility sequences are a class of integer sequences where each sequence satisfies a nonlinear recurrence relation arising from division polynomials on elliptic curves. We consider the elliptic divisibility sequence associated with $y^2 + y = x^3 - x$, which is also a strong divisibility sequence~\seq{A006769}.
    This sequence starts $(C_n)_{n\geq1}\colon 1, 1, -1, 1, 2, -1, -3, -5, 7, -4, -23, 29, 59, 129,\ldots$. 

    We calculate $T_{2,C}(12,x)$ using our matrix product formula. For the sequence $C$, one can show $p=2$ is ideal, thus we use Theorem~\ref{thm:ideal_cnomial}. The matrices for $p=2$ are $M_2(0)$ and $M_2(1)$ which compute to 
    $$M_2(0)=\begin{bmatrix}1&1\\0&2x\end{bmatrix}, \quad M_2(1)=\begin{bmatrix}2&0\\x&x\end{bmatrix}.$$ 
    For this sequence, $\alpha(2)=5$ since the first index $n$ such that $2 \mid C_n$ is $n=5$. Furthermore, we find $s(2)=1$. Therefore, writing $12$ in the form $12=\alpha(p)\cdot n+r$ shows $n=2$ and $r=2$. Since $n=2=10_2$, the partial product excluding the leftmost vector evaluates to $$M_2(0)M_2(1)\begin{bmatrix}1\\0\end{bmatrix}=\begin{bmatrix}1&1\\0&2x\end{bmatrix}\begin{bmatrix}2&0\\x&x\end{bmatrix}\begin{bmatrix}1\\0\end{bmatrix}=\begin{bmatrix}2+x\\2x^2\end{bmatrix}.$$ 
The leftmost vector evaluates as $$\begin{bmatrix}
    r+1&(\alpha(p) - r - 1)x^{s(p) - 1}
\end{bmatrix}=\begin{bmatrix}
    3&2
\end{bmatrix}.$$
Multiplying this by the partial product gives $$T_{2,C}(12,x)=\begin{bmatrix}
    3&2
\end{bmatrix}\begin{bmatrix}2+x\\2x^2\end{bmatrix}=6+3x+4x^2.$$ 
\end{example*}

Now we turn to acceptable primes. We introduce notation to state Theorem~\ref{thm:acceptable_cnomial}. For an acceptable prime, the initial vector is defined in terms of
$$f_{p,C,\lambda}(r,r_1,r_2)=\lambda\sum_{j=1}^{s(p)}\frac{\alpha(p^{s(p)})}{\alpha(p^{j})} + \sum_{j=1}^{s(p)}\left(\! \left\lfloor\frac{r}{\alpha(p^{j})} \right\rfloor - \left\lfloor\frac{r_1}{\alpha(p^{j})} \right\rfloor - \left\lfloor\frac{r_2}{\alpha(p^{j})} \right\rfloor \right).$$
This accounts for the difference in behavior between an acceptable prime and an ideal prime for the initial interval $2 \leq k \leq s(p)$, and the consequential effect to the $p$-adic valuation of the $C$-nomial. This will seem more natural as a special case of the inital vector in Theorem~\ref{thm:acceptable_cmultinomial}.
\begin{notation*} Define a vector $u_{p,C}(r)=\begin{bmatrix}
    u_{p,C,0}(r)&u_{p,C,1}(r)
\end{bmatrix}$ where 
$$u_{p,C,\lambda}(r)=\sum_{\substack{(r_1,r_2) \in \{0, 1, \ldots, \alpha(p^{s(p)})-1\}^2 \\ r_1+r_2 = r+\lambda\alpha(p^{s(p)})}}x^{f_{p,C,\lambda}(r, r_1,r_2)-\lambda} .$$
One can check that this $u_{p,C}(r)$ specializes to the vector in Theorem~\ref{thm:ideal_cnomial} when $p$ is ideal.
\end{notation*}

\begin{example*}
    Consider the Fibonacci sequence $F$ with $p=2$. We compute the vectors $u_{p,F}(r)$ for all $r$. For the Fibonacci sequence, $s(2)=3$ and $\alpha(p^{s(p)})=\alpha(2^3)=6$. We demonstrate the calculation for $r=1$ where
$$u_{2,F}(1)=\begin{bmatrix}
    \sum\limits_{r_1=0}^{1}x^{f_{2,F,0}(1,r_1,1-r_1)}
    &
    \sum\limits_{r_1=2}^{6}x^{f_{2,F,1}(r,r_1,7-r_1)-1}
\end{bmatrix}.$$
The term $\sum\limits_{j=1}^{s(p)}\frac{\alpha(p^{s(p)})}{\alpha(p^{j})}=\sum\limits_{j=1}^{3}\frac{\alpha(2^{3})}{\alpha\left(2^{j}\right)}$ in $f$ evaluates to $\frac{\alpha(8)}{\alpha(2)}+\frac{\alpha(8)}{\alpha(4)}+\frac{\alpha(8)}{\alpha(8)} = \frac{6}{3}+\frac{6}{6}+\frac{6}{6}=4$. The exponents in the first and second entries of the vector thus evaluate to 
\[
\begin{array}{c|c}
(r_1, r_2) & f_{2,F,0}(1, r_1, r_2) \\
\hline
(0,1) & 0 \\
(1,0) & 0 \\
\end{array}
\quad\quad
\begin{array}{c|c}
(r_1, r_2) & f_{2,F,1}(1, r_1, r_2) -1 \\
\hline
(2,5) & 2 \\
(3,4) & 1 \\
(4,3) & 1 \\
(5,2) & 2 \\
\end{array}
\]
making the vector $$u_{2,F}(1)=\begin{bmatrix}
    2&2x+2x^2
\end{bmatrix}.$$
The vector can be computed for other values of $r$ similarly, finding
\[
\begin{array}{c|c}
r & u_{2,F}(r) \\
\hline
0 & \begin{bmatrix}1&x+4x^2 \end{bmatrix} \\
1 & \begin{bmatrix}2&2x+2x^2 \end{bmatrix} \\
2 & \begin{bmatrix}3&3x\end{bmatrix} \\
3 & \begin{bmatrix}2+2x&2x^2 \end{bmatrix} \\
4 & \begin{bmatrix}4+x&x^2 \end{bmatrix} \\
5 & \begin{bmatrix}6&0 \end{bmatrix} \\
\end{array}.
\] 
\end{example*}
\begin{theorem}\label{thm:acceptable_cnomial}

Let $ C $ be a strong divisibility sequence, and let $ p $ be an acceptable prime for $ C $. Then for any $ n \geq 0 $, with the standard base-$p$ representation of $n = n_{\ell}\cdots n_1n_0$, and for any $ 0 \leq r < \alpha(p^{s(p)}) $, we have 
\[
T_{p,C}(\alpha(p^{s(p)}) \cdot n + r, x)
= u_{p,C}(r) M_p(n_0) M_p(n_1) \cdots M_p(n_\ell) \begin{bmatrix} 1 \\ 0 \end{bmatrix}.
\]
\end{theorem}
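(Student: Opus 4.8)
\section*{Proof proposal}

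The plan is to reduce the statement to Rowland's binomial theorem by first establishing a Kummer-type carry formula for $\nu_p\!\binom{N}{m}_C$, where $N=\alpha(p^{s(p)})\cdot n+r$, and then splitting the carries into a ``low'' block governed by $r$ and a ``high'' block governed by the base-$p$ digits of $n$. First I would prove a generalized Legendre formula. Since $C$ is a strong divisibility sequence, the gcd condition gives $p^k\mid C_i$ if and only if $\alpha(p^k)\mid i$, so that $\nu_p(C_i)=\sum_{k\ge1}[\alpha(p^k)\mid i]$ and hence $\nu_p(n!_C)=\sum_{k\ge1}\floor{n/\alpha(p^k)}$. Subtracting gives
\[
\nu_p\!\binom{N}{m}_C=\sum_{k\ge1}\paren{\floor{N/\alpha(p^k)}-\floor{m/\alpha(p^k)}-\floor{(N-m)/\alpha(p^k)}},
\]
each summand being the carry indicator at level $\alpha(p^k)$.

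Next, write $s=s(p)$ and $q=\alpha(p^{s})$ and use acceptability: $\alpha(p^k)\mid q$ for $k\le s$ and $\alpha(p^{s+j})=qp^{\,j}$ for $j\ge1$. I would split the sum at $k=s$. Writing $m=q\mu+r_1$ and $N-m=q\nu+r_2$ with $r_1,r_2\in\{0,\dots,q-1\}$, the carry out of the lowest block is the unique $\lambda\in\{0,1\}$ with $r_1+r_2=r+\lambda q$, and then $\mu+\nu=n-\lambda$. A direct computation using $\alpha(p^k)\mid q$ shows the low part ($k\le s$) equals $f_{p,C,\lambda}(r,r_1,r_2)$, while the floor identity $\floor{N/(qp^{\,j})}=\floor{n/p^{\,j}}$ (and likewise for $m$ and $N-m$) shows the high part ($k>s$) equals $g_\lambda(n,\mu):=\sum_{j\ge1}\paren{\floor{n/p^{\,j}}-\floor{\mu/p^{\,j}}-\floor{\nu/p^{\,j}}}$. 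The map $m\mapsto(\lambda,r_1,r_2,\mu)$ is a bijection onto the admissible index set, so the generating function factors as
\[
T_{p,C}(N,x)=\sum_{\lambda=0}^{1}\paren{\sum_{\substack{r_1+r_2=r+\lambda q\\ 0\le r_1,r_2<q}}x^{f_{p,C,\lambda}(r,r_1,r_2)}}S_\lambda(n),\qquad S_\lambda(n):=\sum_{\mu+\nu=n-\lambda}x^{g_\lambda(n,\mu)}.
\]
By the definition of $u_{p,C,\lambda}$ one has $\sum_{r_1+r_2=r+\lambda q}x^{f_{p,C,\lambda}}=x^{\lambda}u_{p,C,\lambda}(r)$, so it remains only to identify the two components of the matrix product.

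The final and main step is to show that $M_p(n_0)\cdots M_p(n_\ell)\begin{bmatrix}1\\0\end{bmatrix}=\begin{bmatrix}S_0(n)\\ xS_1(n)\end{bmatrix}$. Here I would read $M_p(d)$ as a transfer matrix for base-$p$ addition: the row is the incoming carry $c$, the column the outgoing carry $c'$, and the entry is $x^{c}$ times the number of digit pairs $(a,b)\in\{0,\dots,p-1\}^2$ with $a+b+c=d+pc'$; checking the four cases recovers $M_p(d)$ exactly. Along a run the accumulated exponent is $\sum_i c_i$, which equals the initial carry $c_0$ plus the number of carries generated. Taking $c_0=0$ reproduces Rowland's $T_p(n,x)=S_0(n)$ in the top component, which is the known case. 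The bottom component forces $c_0=1$, i.e. it sums over additions $\mu+\nu+1=n$, so that $g_1(n,\mu)$ is the number of carries generated when adding $\mu+\nu+1$ in base $p$; the exponent is then one more than this carry count, whence the component equals $x\sum_{\mu}x^{g_1(n,\mu)}=xS_1(n)$.

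Dotting $u_{p,C}(r)=\begin{bmatrix}u_{p,C,0}(r)&u_{p,C,1}(r)\end{bmatrix}$ against $\begin{bmatrix}S_0(n)&xS_1(n)\end{bmatrix}^{\mathsf T}$ then reproduces the factored formula, the factor $x^{-\lambda}$ in the definition of $u_{p,C,\lambda}$ cancelling precisely the extra factor $x$ in the bottom entry. I expect the bottom-component identity to be the crux: Rowland's theorem supplies only the top component, so the real work is to justify the carry-transfer reading of $M_p(d)$ and the ``carry-in $1$'' interpretation that pins down the bottom entry, together with the bookkeeping that separates the $r$-dependent low block (captured by $f_{p,C,\lambda}$) from the $n$-dependent high block (captured by the matrices).
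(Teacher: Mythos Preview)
Your proposal is correct and follows essentially the same route as the paper: establish the Legendre-type formula $\nu_p(n!_C)=\sum_{k\ge1}\lfloor n/\alpha(p^k)\rfloor$, split at $k=s(p)$ using acceptability, decompose $m$ and $N-m$ modulo $q=\alpha(p^{s(p)})$ to obtain the carry $\lambda\in\{0,1\}$, and identify the two pieces with Rowland's partial product $M_p(n_0)\cdots M_p(n_\ell)\begin{bmatrix}1\\0\end{bmatrix}$. The paper does not give a separate proof of this theorem but derives it as the $k=2$ case of the $C$-multinomial result (Theorem~\ref{thm:acceptable_cmultinomial}), whose proof via Lemmas~\ref{lemma:acceptableCorial} and~\ref{lemma:acceptableCmultinomialSummationInBinomial} is exactly your argument with $k$ summands in place of two.

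The one point on which you go a bit further than the paper is the bottom component of the column vector. The paper simply quotes from Rowland that the second entry equals $\sum_{m=0}^{n-1}x^{1+\nu_p\left(n\binom{n-1}{m}\right)}$, whereas you recover this by reading $M_p(d)$ as the transfer matrix for base-$p$ addition with carry (row = incoming carry, column = outgoing carry, weight $x^{c}$), so that the bottom entry corresponds to carry-in $c_0=1$ and yields $xS_1(n)$. This is a nice self-contained justification of what the paper takes as a black box; otherwise the two arguments are the same.
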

As in Theorem ~\ref{thm:ideal_cnomial}, other than the leftmost vector, the right hand side is independent of $C$.
\begin{example*}
Consider the Fibonacci sequence $F$ with $p=2$. We demonstrate calculating $T_{2,F}(13, x)$ using our matrix product formula, which provides a more computationally efficient way to find this polynomial. Since $p = 2$ is acceptable but not ideal, we use Theorem~\ref{thm:acceptable_cnomial}. The matrices for $p=2$ are $M_2(0)$ and $M_2(1)$, which compute to $$M_2(0)=\begin{bmatrix}1&1\\0&2x\end{bmatrix},\quad 
M_2(1)=\begin{bmatrix}2&0\\x&x\end{bmatrix}.
$$ Recall for the Fibonacci sequence, $s(2)=3$ and $\alpha(p^{s(p)})=\alpha(2^3)=6$. Therefore, writing $13$ in the form $13=\alpha(p^{s(p)})\cdot n+r$ shows $n=2$ and $r=1$. Since $n=2=10_2$, the partial product excluding the leftmost vector evaluates to $$\begin{bmatrix}1&1\\0&2x\end{bmatrix}\begin{bmatrix}2&0\\x&x\end{bmatrix}\begin{bmatrix}1\\0\end{bmatrix}=\begin{bmatrix}2+x\\2x^2\end{bmatrix}.$$ 
Above we found the leftmost vector as $$u_{2,F}(1)=\begin{bmatrix}
    2&2x+2x^2
\end{bmatrix}.$$
Multiplying this by the partial product gives $$T_{2,F}(13,x) =\begin{bmatrix}
    2&2x+2x^2
\end{bmatrix}\begin{bmatrix}2+x\\2x^2\end{bmatrix}=4x^4+4x^3+2x+4.$$ 

Notice how if we wanted to calculate $T_{2,F}(14,x)$ instead, using the definition of the generating polynomial would require us to recalculate $\binom{14}{m}_C$ for all $0\leq m\leq 14$, and their $2$-adic valuations. However, using our formula, we would not need to change the partial product of the matrices. We would only need to change the initial vector, using $u_{2,F}(2)$ instead of $u_{2,F}(1)$.
\end{example*}

We generalize our formulas to multinomial coefficient analogues. For a $k$-tuple $\mathbf{m}=(m_1,m_2,\ldots m_k)$ of non-negative integers, define $$\total \mathbf{m}\colonequal m_1+m_2+\cdots m_k.$$
The multinomial coefficient is $$\binom{n}{m_1,\, m_2,\, \dots,\, m_k} = \frac{n!}{m_1!m_2!\cdots m_k!},$$ where $\total\mathbf{m}=n$.
The generating function studied by Rowland ~\cite{Rowland2018} is
\[
T_{p,k}(n, x) = \sum_{\substack{\mathbf{m} \in \N^k \\ \total\mathbf{m} = n}} x^{\nu_p\!\left(\binom{n}{m_1,\, m_2,\, \dots,\, m_k}\right)}.
\]
where $k\geq2$ and $\N$ is the set of non-negative integers including 0.
Rowland~\cite{Rowland2018} defined $k \times k$ matrices $M_{p,k}(d)$, and proved they can be similarly used to construct a matrix product formula for $T_{p,k}(n, x)$.

\begin{notation*}
For a strong divisibility sequence $C$, define the \emph{$C$-multinomial} as 
$$\binom{n}{m_1,\, m_2,\, \dots,\, m_k}_C=\frac{n!_C}{m_1!_C m_2!_C \cdots  m_k!_C}.$$
and let $T_{p,k,C}(n,x)$ denote the generating polynomial for the $p$-adic valuations of $C$-multinomials 
\[
T_{p,k,C}(n, x) = \sum_{\substack{\mathbf{m} \in \N^k \\ \total\mathbf{m} = n}} x^{\nu_p\!\left(\binom{n}{m_1,\, m_2,\, \dots,\, m_k}_C\right)}.
\]
\end{notation*}

\begin{notation*} 
Define the vector $u_{p,k,C}(r)=\begin{bmatrix}
    u_{p,k,C,0}(r)&u_{p,k,C,1}(r)&\cdots&u_{p,k,C,k-1}(r)
\end{bmatrix}$ where 
$$u_{p,k,C,\lambda}(r)=x^{(s(p)-1)\lambda}\sum_{j=0}^{\lambda}(-1)^j\binom{k}{j}\binom{r+(\lambda-j)\alpha(p)+k-1}{k-1}.$$
\end{notation*}
\begin{theorem}\label{thm:ideal_cmultinomial}
Let $ C $ be a strong divisibility sequence, and let $ p $ be an ideal prime for $ C $. Let $k \geq 2$ and $e=\begin{bmatrix}
    1&0&0&\cdots&0
\end{bmatrix}\in\mathbb{Z}^k$. Then for any $ n \geq 0 $, with the standard base-$p$ representation of $n = n_{\ell}\cdots n_1n_0$, and for any $ 0 \leq r < \alpha(p) $, we have
\[
T_{p,k,C}(\alpha(p) \cdot n + r, x)
= u_{p,k,C}(r) M_{p,k}(n_0) M_{p,k}(n_1) \cdots M_{p,k}(n_\ell) e^\intercal.
\]
\end{theorem}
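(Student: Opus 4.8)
The plan is to prove Theorem~\ref{thm:ideal_cmultinomial} by reducing the $p$-adic valuation of a $C$-multinomial to a sum of carry counts in base-$p$ addition, which is exactly what the matrices $M_{p,k}(d)$ are built to track. The first step is to obtain a Kummer-type formula. For an ideal prime, the appearance of powers of $p$ in $C$ is governed cleanly by $\alpha(p)$ and $s(p)$: since $a_k(p)=1$ for $2 \le k \le s(p)$ and $a_k(p)=p$ thereafter, one computes that $\nu_p(C_i)$ depends only on $\nu_p(i)$, specifically $\nu_p(C_i)=0$ when $\alpha(p)\nmid i$, and $\nu_p(C_i)=s(p)+\nu_p(i/\alpha(p))\cdot 1$ adjusted appropriately when $\alpha(p)\mid i$. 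Summing over the $C$-orial and using Legendre's formula then yields that $\nu_p\!\left(\binom{n}{m_1,\dots,m_k}_C\right)$ equals $s(p)$ times the number of carries when adding $m_1/\alpha(p),\dots,m_k/\alpha(p)$ in base $p$ (for the high part), plus a correction coming from the residues of the $m_i$ modulo $\alpha(p)$. This is the analogue of Kummer's theorem for $C$-nomials that underlies Knuth--Wilf's integrality result, and it is where the structure of an ideal prime enters decisively.

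The second step is to set up the recursion that the matrix product encodes. Writing $n=\alpha(p)\cdot N + r$ with $0 \le r < \alpha(p)$, I would split each $m_i$ as $m_i = \alpha(p)\cdot q_i + \rho_i$ and separate the count into a contribution from the base-$\alpha(p)$ residues $(\rho_1,\dots,\rho_k)$ and a contribution from the quotients $(q_1,\dots,q_k)$, which themselves sum to $N$ plus a carry determined by whether the residues $\rho_i$ sum to at least $\alpha(p)$. The key bookkeeping variable is $\lambda = \lfloor (\total\boldsymbol{\rho})/\alpha(p)\rfloor \in \{0,1,\dots,k-1\}$, the carry out of the lowest block; this is precisely the index $\lambda$ appearing in $u_{p,k,C,\lambda}(r)$. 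I would verify that the generating polynomial over all residue tuples with a fixed carry $\lambda$, weighted by the partial valuation they produce, equals the $\lambda$-entry $u_{p,k,C,\lambda}(r)$. The inclusion--exclusion sum $\sum_{j=0}^\lambda (-1)^j\binom{k}{j}\binom{r+(\lambda-j)\alpha(p)+k-1}{k-1}$ counts tuples $(\rho_1,\dots,\rho_k)\in\{0,\dots,\alpha(p)-1\}^k$ with $\total\boldsymbol{\rho}=r+\lambda\alpha(p)$ via the standard stars-and-bars-with-upper-bounds formula, and the prefactor $x^{(s(p)-1)\lambda}$ records the valuation $s(p)\cdot\lambda$ contributed by the carry, offset by the $\lambda$ that is reabsorbed into the recursion on $N$.

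The third step is the induction on the number of base-$p$ digits $\ell$ of $N$. Rowland's matrices $M_{p,k}(d)$ are designed so that multiplying by $M_{p,k}(n_i)$ advances the base-$p$ carry propagation by one digit while accumulating a factor of $x^{s(p)}$ per carry; here, because we have already peeled off the $\alpha(p)$-block into the initial vector, the remaining problem of adding $q_1,\dots,q_k$ (summing to $N$) in base $p$ is \emph{exactly} the ordinary multinomial carry-counting problem that Rowland solved, except each carry now costs $x^{s(p)}$ rather than $x$. I would reconcile this by observing that for an ideal prime the high-part valuation is $s(p)$ per carry, but the recursion variable in the initial vector already carries the $(s(p)-1)\lambda$ weight, so the matrices $M_{p,k}(d)$ with their single factor of $x$ per carry suffice unchanged. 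Invoking Rowland's established identity $T_{p,k}(N,x)=e\,M_{p,k}(n_0)\cdots M_{p,k}(n_\ell)\,e^\intercal$ and matching the carry-in state $\lambda$ to the $\lambda$-th standard basis direction closes the induction.

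The main obstacle I anticipate is the second step: correctly identifying how the low-block carry $\lambda$ couples to the subsequent base-$p$ addition of the quotients, and checking that the weight $x^{(s(p)-1)\lambda}$ together with the reindexing absorbs the discrepancy between a carry costing $s(p)$ in the true valuation and costing only $1$ in Rowland's matrices. In particular I must confirm that the initial vector $u_{p,k,C}(r)$ presents the correct ``boundary data'' to the matrix product so that the carry out of the $\alpha(p)$-block enters $M_{p,k}(n_0)$ as a carry-in of the right magnitude; an off-by-one in the valuation or a miscounted carry here would break the telescoping. Verifying the claimed specialization to the $C$-nomial vector of Theorem~\ref{thm:ideal_cnomial} when $k=2$ provides a useful consistency check on the combinatorial identity and the exponent bookkeeping.
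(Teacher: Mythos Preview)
Your overall architecture matches the paper's proof almost exactly: write each $m_i = \alpha(p) m_i' + r_i$, classify by the carry $\lambda = n' - \total\mathbf{m}'$ out of the $\alpha(p)$-block, count residue tuples with $\total\mathbf{r} = r + \lambda\alpha(p)$ by stars-and-bars with inclusion--exclusion (this is exactly the sum defining $u_{p,k,C,\lambda}(r)$), and then hand the quotients $\mathbf{m}'$ off to Rowland's machinery. The paper packages that last step by recognising the high-part contribution as Rowland's auxiliary polynomial $T_{p,k,\lambda}(n',x) = x^{\lambda + \nu_p(n'!/(n'-\lambda)!)}\,T_{p,k}(n'-\lambda,x)$ and quoting his result that the column vector $\bigl[T_{p,k,0}(n',x),\dots,T_{p,k,k-1}(n',x)\bigr]^\intercal$ equals $M_{p,k}(n_0)\cdots M_{p,k}(n_\ell)\,e^\intercal$; your ``carry-in state $\lambda$ matches the $\lambda$-th standard basis direction'' is the same statement in different clothing.

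There is, however, a genuine error in your Kummer-type formula. From Lemma~\ref{lemma:idealCorial} one gets $\nu_p(n!_C) = s(p)\,n' + \nu_p(n'!)$ where $n' = \lfloor n/\alpha(p)\rfloor$, and hence
\[
\nu_p\!\left(\binom{n}{m_1,\dots,m_k}_C\right)
= s(p)\bigl(n' - \total\mathbf{m}'\bigr) + \nu_p\!\left(\binom{n'}{m_1',\dots,m_k'}\right)
= s(p)\,\lambda + (\text{high-part carries}).
\]
The factor $s(p)$ multiplies only the single $\alpha(p)$-block carry $\lambda$; the subsequent base-$p$ carries in the addition of $m_1',\dots,m_k'$ each contribute exactly $1$, not $s(p)$. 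So your sentence ``each carry now costs $x^{s(p)}$ rather than $x$'' is wrong, and the ``reconciliation'' you worry about is unnecessary for the high part: Rowland's matrices with their single $x$ per carry apply verbatim to $\mathbf{m}'$. The entire $s(p)$-dependence is already absorbed by writing $x^{s(p)\lambda} = x^{(s(p)-1)\lambda}\cdot x^{\lambda}$, with the first factor sitting in $u_{p,k,C,\lambda}(r)$ and the second in Rowland's $T_{p,k,\lambda}$. Once you correct the valuation formula, the obstacle you flag in your final paragraph disappears and the proof goes through exactly as the paper's does.
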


Similar to $C$-nomials, for an acceptable prime $ p $ and $k\geq2$, the initial vector is defined in terms of
\[
f_{p,k,C,\lambda}(r,\mathbf{r})=\lambda\sum_{j=1}^{s(p)}\frac{\alpha(p^{s(p)})}{\alpha(p^{j})} + \sum_{j=1}^{s(p)}\left( \left\lfloor\frac{r}{\alpha(p^{j})} \right\rfloor - \sum_{i=1}^k\left\lfloor\frac{r_i}{\alpha(p^{j})} \right\rfloor \right).
\] 

We now state the theorem for acceptable primes for counting $C$-multinomials.
\begin{notation*} Define a vector $u_{p,k,C}(r)=\begin{bmatrix}
    u_{p,k,C,0}(r)&u_{p,k,C,1}(r)&\cdots&u_{p,k,C,k-1}(r)
\end{bmatrix}$ where 
$$u_{p,k,C,\lambda}(r)=\sum_{\substack{\mathbf{r} \in \{0, 1, \ldots, \alpha(p^{s(p)})-1\}^k \\ \total\mathbf{r} = r+\lambda\alpha(p^{s(p)})}}x^{f_{p,k,C,\lambda}(r, \mathbf{r})-\lambda} .$$
\end{notation*}

\begin{theorem}\label{thm:acceptable_cmultinomial}
Let $ C $ be a strong divisibility sequence, and let $ p $ be an acceptable prime for $ C $. Let $k \geq 2$ and $e=\begin{bmatrix}
    1&0&0&\cdots&0
\end{bmatrix}\in\mathbb{Z}^k$. Then for any $ n \geq 0 $, with the standard base-$p$ representation of $n = n_{\ell}\cdots n_1n_0$, and for any $ 0 \leq r < \alpha(p^{s(p)}) $, we have
\[
T_{p,k,C}(\alpha(p^{s(p)}) \cdot n + r, x)
= u_{p,k,C}(r) M_{p,k}(n_0) M_{p,k}(n_1) \cdots M_{p,k}(n_\ell) e^\intercal
\]
\end{theorem}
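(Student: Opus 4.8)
The plan is to separate the $p$-adic valuation of a $C$-multinomial into a low-rank piece, governed by the ranks $\alpha(p^j)$ with $j \le s(p)$ and absorbed entirely into the initial vector, and a high-rank piece that is independent of $C$ and handled by the very matrices $M_{p,k}(d)$ already appearing in Theorem~\ref{thm:ideal_cmultinomial}. Throughout I write $s \colonequal s(p)$ and $\beta \colonequal \alpha(p^{s(p)})$. Acceptability is used in exactly one way: since $a_k(p) \in \N$ for $2 \le k \le s$ we have $\alpha(p^j) \mid \beta$ for all $j \le s$, and since $a_k(p) = p$ for $k > s$ we have $\alpha(p^{s+t}) = \beta\,p^{t}$ for all $t \ge 1$. (For an unacceptable prime the first divisibility can fail, which is what breaks the argument.)

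First I would record the generalized Legendre identity
\[
\nu_p\!\left(\binom{n}{m_1,\dots,m_k}_C\right) = \sum_{j \ge 1}\left( \floor{\frac{n}{\alpha(p^j)}} - \sum_{i=1}^k \floor{\frac{m_i}{\alpha(p^j)}}\right),
\]
which comes from $\nu_p(C_i) = \#\{\,j \ge 1 : \alpha(p^j) \mid i\,\}$ (a direct consequence of strong divisibility) together with $\nu_p(n!_C) = \sum_{i=1}^n \nu_p(C_i) = \sum_{j \ge 1}\floor{n/\alpha(p^j)}$. I would then fix the base-$\beta$ decomposition $n = \beta N + r$ and $m_i = \beta M_i + r_i$ with $0 \le r, r_i < \beta$, and set $\lambda \colonequal N - \sum_{i} M_i$, equivalently $\total\mathbf{r} = r + \lambda\beta$; a short count shows $\lambda \in \{0,1,\dots,k-1\}$. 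Splitting the sum over $j$ at $j = s$ and using $\alpha(p^j) \mid \beta$ to write $\floor{m_i/\alpha(p^j)} = (\beta/\alpha(p^j))M_i + \floor{r_i/\alpha(p^j)}$ for $j \le s$ (and similarly for $n$), the low-rank terms collapse to exactly $f_{p,k,C,\lambda}(r,\mathbf{r})$, while $\alpha(p^{s+t}) = \beta p^t$ turns the high-rank terms into
\[
H(N,\mathbf{M}) \colonequal \sum_{t \ge 1}\left(\floor{\frac{N}{p^t}} - \sum_{i=1}^k \floor{\frac{M_i}{p^t}}\right),
\]
which depends only on $N$ and $\mathbf{M}$, not on $C$.

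With the decomposition $\nu_p = f_{p,k,C,\lambda}(r,\mathbf{r}) + H(N,\mathbf{M})$ in hand, the generating function factors: summing $x^{\nu_p}$ over all $\mathbf{m}$ with $\total\mathbf{m} = n$ is the same as choosing residues $\mathbf{r} \in \{0,\dots,\beta-1\}^k$ and high parts $\mathbf{M} \in \N^k$ with $\total\mathbf{r} = r + \lambda\beta$ and $\sum_i M_i = N - \lambda$, so grouping by $\lambda$ gives
\[
T_{p,k,C}(\beta N + r, x) = \sum_{\lambda=0}^{k-1} \Bigg(\sum_{\substack{\mathbf{r} \in \{0,\dots,\beta-1\}^k \\ \total\mathbf{r} = r + \lambda\beta}} x^{f_{p,k,C,\lambda}(r,\mathbf{r})}\Bigg) \Bigg(\sum_{\substack{\mathbf{M} \in \N^k \\ \sum_i M_i = N - \lambda}} x^{H(N,\mathbf{M})}\Bigg).
\]
The first bracket equals $x^{\lambda}\,u_{p,k,C,\lambda}(r)$ by the definition of $u_{p,k,C,\lambda}$. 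For the second I would prove the $C$-independent matrix identity
\[
\Big[\,M_{p,k}(n_0) M_{p,k}(n_1) \cdots M_{p,k}(n_\ell)\,e^\intercal\,\Big]_{\lambda} = x^{\lambda}\sum_{\substack{\mathbf{M} \in \N^k \\ \sum_i M_i = N - \lambda}} x^{H(N,\mathbf{M})},
\]
which is precisely the statement already underlying the matrix product in Theorem~\ref{thm:ideal_cmultinomial} (the matrices do not involve $C$); I would establish it by induction on $\ell$, using the carry-transfer interpretation of $M_{p,k}(d)$ that records a factor of $x$ for each unit of carry created when the base-$p$ digit $d$ of $N$ is split among the $M_i$. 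Thus the second bracket equals $x^{-\lambda}$ times the $\lambda$-th component of the matrix product; multiplying the two brackets, the powers $x^{\pm\lambda}$ cancel, and summing over $\lambda$ yields the dot product $u_{p,k,C}(r)\,\big[M_{p,k}(n_0)\cdots M_{p,k}(n_\ell)\,e^\intercal\big]$, which is the asserted formula.

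The main obstacle is the matrix identity in the last display: carrying out the induction so that the deficit $\lambda$ is transferred correctly across digits, and matching the bookkeeping powers of $x$—the $x^{\lambda}$ built into $u_{p,k,C,\lambda}$ through its $-\lambda$ exponent against the $x^{\lambda}$ produced by the matrices. Once the valuation decomposition is established everything else is routine, and the conceptual payoff becomes visible: passing from an ideal to an acceptable prime changes only the low-rank contribution $f_{p,k,C,\lambda}$, hence only the initial vector $u_{p,k,C}(r)$, while leaving the $C$-independent matrix product untouched—which is exactly why the same matrices $M_{p,k}(d)$ serve in both Theorem~\ref{thm:ideal_cmultinomial} and the present theorem.
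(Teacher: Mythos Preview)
Your proposal is correct and follows essentially the same route as the paper: the base-$\beta$ decomposition $n=\beta N+r$, $m_i=\beta M_i+r_i$ with $\beta=\alpha(p^{s(p)})$, the split of the valuation into the low-rank contribution $f_{p,k,C,\lambda}(r,\mathbf r)$ and the $C$-independent high-rank contribution $H(N,\mathbf M)$, and the identification of $x^{\lambda}\sum_{\sum M_i=N-\lambda}x^{H(N,\mathbf M)}$ with the $\lambda$-th entry of the matrix product are exactly the content of the paper's Lemma~\ref{lemma:acceptableCorial}, Lemma~\ref{lemma:acceptableCmultinomialSummationInBinomial}, and the invocation of Rowland's column-vector identity in the proof of Theorem~\ref{thm:acceptable_cmultinomial}. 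The only cosmetic difference is that you make $H$ explicit and propose to redo Rowland's induction rather than cite it.
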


A sequence $(s_n)_{n\geq0}$, with entries in some field, is \emph{$p$-regular} if the vector space generated by the set of subsequences $\{(s_{p^en+i})_{n\geq 0} \colon e \geq 0 \text{ and }0\leq i \leq p^e-1\}$ is finite dimensional. Allouche and Shallit~\cite{AlloucheShallit1992} first introduced regular sequences and showed they have many desirable properties, making them a natural class. One such property emerges out of this class's origin in theoretical computer science and specifically, finite state automata: any sequence whose term at index $n$ may be generated as a function of the digits in the base $k$ representation of $n$ is a $k$-regular sequence. Rowland showed that $(T_p(n,x))_{n\geq0}$ and $(T_{p,k}(n,x))_{n\geq0}$ are each a $p$-regular sequence of polynomials~\cite{Rowland2018}. Our analogues to the sequence $C$ show that certain subsequences are similarly $p$-regular. That is, $(T_{p,C}(\alpha(p)\cdot n+r))_{n\geq0}$ for any $0 \leq r < \alpha(p)$ is a $p$-regular sequence, and $(T_{p,k,C}(\alpha(p^{s(p)})\cdot n+r))_{n\geq0}$ for any $0 \leq r < \alpha(p^{s(p)})$ is a $p$-regular sequence.

We set up necessary identities for the subsequent proofs in Section~\ref{section:valuationidentites}. We consider ideal and acceptable primes in separate sections, proving Theorems \ref{thm:ideal_cnomial} and \ref{thm:ideal_cmultinomial} in Section~\ref{section:ideal} dealing with ideal primes. We prove Theorem~\ref{thm:acceptable_cmultinomial} in Section~\ref{section:acceptable} dealing with acceptable primes. Note we do not provide a separate proof for Theorem~\ref{thm:acceptable_cnomial} as it is a subcase of Theorem~\ref{thm:acceptable_cmultinomial} that is not substantially simpler to prove. 
Since ideal primes are also acceptable and $C$-nomials are $C$-multinomials with $k=2$, Theorem~\ref{thm:acceptable_cmultinomial} is our most broad statement for a matrix product formula that counts generalized binomial coefficients by their $p$-adic valuation.

\section{Valuation Identities}\label{section:valuationidentites}
In this section we obtain identities for the $p$-adic valuation of the $C$-orial, $C$-nomial, and $C$-multinomial coefficients in the cases of ideal and acceptable primes.

We begin by finding a Legendre-like formula for the $C$-orial:
\begin{proposition}\label{prop:Corial}
For a strong divisibility sequence $C$, prime $p$, and integer $n\geq0$, we have
    \begin{equation*} 
    \nu_p(n!_C) = \sum_{k \geq 1} \left\lfloor \frac{n}{\alpha(p^k)} \right\rfloor.
    \end{equation*} 
\end{proposition}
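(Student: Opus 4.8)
The plan is to compute $\nu_p(n!_C) = \sum_{m=1}^n \nu_p(C_m)$ and then count, for each fixed exponent $k \geq 1$, how many indices $m$ in the range $1 \leq m \leq n$ satisfy $p^k \mid C_m$. The key structural fact I would invoke is that the rank of apparition governs which terms are divisible by a prime power: by definition $\alpha(p^k)$ is the \emph{first} index at which $p^k$ divides $C$, and the strong divisibility property forces the set of indices $m$ with $p^k \mid C_m$ to be exactly the multiples of $\alpha(p^k)$. Granting this divisibility characterization, the count of $m \in \{1,\dots,n\}$ with $p^k \mid C_m$ is precisely $\lfloor n/\alpha(p^k) \rfloor$, and summing over $k$ yields the claimed formula.

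First I would establish the divisibility characterization. The inclusion ``$\alpha(p^k) \mid m \implies p^k \mid C_m$'' follows from strong divisibility: writing $m = q\,\alpha(p^k)$, we have $\gcd(C_m, C_{\alpha(p^k)}) = C_{\gcd(m,\alpha(p^k))} = C_{\alpha(p^k)}$, so $C_{\alpha(p^k)} \mid C_m$, and since $p^k \mid C_{\alpha(p^k)}$ by definition of the rank of apparition, we get $p^k \mid C_m$. For the reverse inclusion ``$p^k \mid C_m \implies \alpha(p^k) \mid m$'', suppose $p^k \mid C_m$. Then $p^k \mid \gcd(C_m, C_{\alpha(p^k)}) = C_{\gcd(m, \alpha(p^k))}$. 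By minimality of $\alpha(p^k)$ as the first index where $p^k$ appears, and since $\gcd(m,\alpha(p^k)) \leq \alpha(p^k)$, this forces $\gcd(m,\alpha(p^k)) = \alpha(p^k)$, i.e. $\alpha(p^k) \mid m$.

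With that in hand, I would assemble the valuation sum. Using $\nu_p(C_m) = \sum_{k\geq 1} [\,p^k \mid C_m\,]$ (the number of powers of $p$ dividing $C_m$) and swapping the order of summation, I would write
\[
\nu_p(n!_C) = \sum_{m=1}^n \nu_p(C_m) = \sum_{m=1}^n \sum_{k \geq 1} [\,p^k \mid C_m\,] = \sum_{k \geq 1} \#\{\,1 \leq m \leq n : p^k \mid C_m\,\}.
\]
By the characterization, the inner cardinality is the number of multiples of $\alpha(p^k)$ in $\{1,\dots,n\}$, namely $\lfloor n/\alpha(p^k)\rfloor$, giving the result. The sum over $k$ is finite since $\alpha(p^k) > n$ for large $k$ (once $p^k$ exceeds every $|C_m|$ for $m \leq n$), so all interchanges are justified. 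One edge case to handle explicitly is when $\alpha(p^k)$ is undefined for some $k$: then no term of $C$ is divisible by $p^k$, the corresponding count is $0$, and one interprets $\lfloor n/\alpha(p^k)\rfloor$ as $0$ (equivalently, the sum is understood to range only over $k$ for which $\alpha(p^k)$ exists).

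The main obstacle is the reverse divisibility inclusion, since it is the only step that genuinely uses the strong divisibility hypothesis in an essential way and is where a careless argument could go wrong; the forward inclusion and the summation bookkeeping are routine once the characterization is secured. I would take care to state the divisibility characterization as a clean standalone claim before the main computation, as it is the crux.
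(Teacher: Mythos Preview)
Your proposal is correct and follows essentially the same approach as the paper: both compute $\nu_p(n!_C)$ by counting, for each $k\geq 1$, the indices $m\leq n$ with $p^k\mid C_m$, and then identify that count as $\lfloor n/\alpha(p^k)\rfloor$ via the characterization $p^k\mid C_m \iff \alpha(p^k)\mid m$. The only difference is that the paper cites Knuth--Wilf for this characterization while you prove it directly from the strong divisibility identity; your version is a bit more self-contained and also makes explicit the finiteness of the sum and the convention when $\alpha(p^k)$ is undefined.
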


\begin{proof}
Given integers $m$ and $n$, let $d_m(n)$ be the number of indices $1 \leq j \leq n$ such that $m \mid C_j.$
The $ p $-adic valuation of the $C$-orial is then
\[
\nu_p(n!_C) = \sum_{k \geq 1} d_{p^k}(n).
\]

Knuth and Wilf~\cite{KnuthWilf1989} proved that the strong divisibility property of a sequence $C$ is equivalent to the following characterization: for each integer $ m > 0 $, either there exists a positive integer $ \alpha(m) $ such that
\[
m \mid C_j \quad \Longleftrightarrow \quad \alpha(m) \mid j \quad \text{for all } j > 0,
\]
or else $ m \nmid C_j $ for any $ j > 0 $, in which case one formally sets $ \alpha(m) = \infty $. 

The $d$ functions for a strong divisibility sequence then have the simple form $$d_m(n) = \left\lfloor\frac{n}{\alpha(m)}\right\rfloor.$$

Substituting $ d_{p^k}(n) = \left\lfloor \frac{n}{\alpha(p^k)} \right\rfloor $, we arrive at the proposed equation.
\end{proof}

The valuation of the $C$-nomial and $C$-multinomial can be stated in terms the valuation of the $C$-orial as 
\begin{align*}
\nu_p\!\left(\binom{n}{m}_{\!C}\right) &= \nu_p(n!_C)-\nu_p(m!_C)-\nu_p((n-m)!_C)\\
\nu_p\!\left(\binom{n}{m_1,m_2,\dots,m_k}_C \right) &= \nu_p(n!_C)-\nu_p(m_1!_C)-\nu_p(m_2!_C)-\cdots-\nu_p(m_k!_C).
\end{align*}

Specifically, Proposition~\ref{prop:Corial} yields explicit expressions for the valuation of the $ C $-orial depending on the class of primes and subsequently simplifying $\alpha(p^k)$.  

\section{Ideal Primes}\label{section:ideal} We first consider ideal primes and prove the matrix product formulas for the generating functions that count $C$-nomial and $C$-multinomial coefficients by their $p$-adic valuations for ideal prime $p$. From the definition of ideal prime and the constraints on $a_k(p)_{k\geq2}$, the rank of apparition for powers of $p$ satisfies \[
\alpha(p^k) =
\begin{cases}
\alpha(p) & \text{if } 1 \leq k \leq s(p) \\
p^{k - s(p)} \alpha(p) & \text{if } k > s(p).
\end{cases}
\]
This specifies our Legendre-like formula in Proposition~\ref{prop:Corial} into one for ideal primes:

\begin{lemma} % [$C$-orial Legendre's Formula for Ideal Primes]
\label{lemma:idealCorial}
Let $ p $ be an ideal prime for a strong divisibility sequence $ C $. Then for any integer $ n \geq 0 $,
\[
\nu_p(n!_C) = s(p) \left\lfloor \frac{n}{\alpha(p)} \right\rfloor + \nu_p\!\left( \left\lfloor \frac{n}{\alpha(p)} \right\rfloor ! \right).
\]
\end{lemma}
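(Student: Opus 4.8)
The plan is to start from the general Legendre-like formula of Proposition~\ref{prop:Corial}, namely $\nu_p(n!_C) = \sum_{k \geq 1} \lfloor n/\alpha(p^k) \rfloor$, and substitute the explicit values of $\alpha(p^k)$ that hold for an ideal prime. Splitting the sum at $k = s(p)$ gives
\[
\nu_p(n!_C) = \sum_{k=1}^{s(p)} \left\lfloor \frac{n}{\alpha(p)} \right\rfloor + \sum_{k > s(p)} \left\lfloor \frac{n}{p^{k-s(p)}\alpha(p)} \right\rfloor,
\]
since $\alpha(p^k) = \alpha(p)$ for $1 \leq k \leq s(p)$ and $\alpha(p^k) = p^{k-s(p)}\alpha(p)$ for $k > s(p)$. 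The first sum is immediately $s(p) \lfloor n/\alpha(p) \rfloor$, which produces the leading term of the claimed identity.

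The key step is then to identify the remaining tail $\sum_{k > s(p)} \lfloor n/(p^{k-s(p)}\alpha(p)) \rfloor$ with $\nu_p(\lfloor n/\alpha(p) \rfloor!)$. Reindexing by $i = k - s(p)$ so that $i$ ranges over $i \geq 1$, the tail becomes $\sum_{i \geq 1} \lfloor n/(p^i \alpha(p)) \rfloor$. The plan here is to invoke the nested-floor identity $\lfloor \lfloor n/a \rfloor / b \rfloor = \lfloor n/(ab) \rfloor$ for positive integers $a,b$, applied with $a = \alpha(p)$ and $b = p^i$; this rewrites each summand as $\lfloor \lfloor n/\alpha(p) \rfloor / p^i \rfloor$. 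Writing $N = \lfloor n/\alpha(p) \rfloor$, the tail is exactly $\sum_{i \geq 1} \lfloor N/p^i \rfloor$, which is the classical Legendre formula for $\nu_p(N!)$. Substituting back $N = \lfloor n/\alpha(p) \rfloor$ yields the second term and completes the proof.

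I do not expect any serious obstacle here; this is essentially a reindexing plus two standard facts (the nested-floor identity and Legendre's formula for ordinary factorials). The one point deserving care is the reduction of the tail: one must justify that the nested-floor identity applies with $\alpha(p)$ and $p^i$ as genuine positive integers (which holds since $p$ is ideal, so $\alpha(p)$ is a finite positive integer), and one should note that both infinite sums have only finitely many nonzero terms so the manipulations are legitimate. With those remarks in place the identity follows directly.
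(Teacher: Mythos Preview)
Your proposal is correct and follows essentially the same approach as the paper's own proof: both split the sum from Proposition~\ref{prop:Corial} at $k=s(p)$, evaluate the first block as $s(p)\lfloor n/\alpha(p)\rfloor$, and reduce the tail to Legendre's formula for $\lfloor n/\alpha(p)\rfloor!$. Your explicit mention of the nested-floor identity is slightly more detailed than the paper (which absorbs it into the substitution $m=\lfloor n/\alpha(p)\rfloor$), but the argument is the same.
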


\begin{proof}

We split the sum in Proposition ~\ref{prop:Corial} based on the index $k$ as
\[
\nu_p(n!_C) = \sum_{1 \leq k \leq s(p)} \left\lfloor \frac{n}{\alpha(p)} \right\rfloor + \sum_{k > s(p)} \left\lfloor \frac{n}{p^{k - s(p)} \alpha(p)} \right\rfloor.
\]
Let $ m = \left\lfloor \frac{n}{\alpha(p)} \right\rfloor $. Then by Legendre's formula, the second addend becomes
\[
\sum_{j \geq 1} \left\lfloor \frac{m}{p^j} \right\rfloor = \nu_p(m!),
\]
which completes the proof.
\end{proof}

Using this valuation of the $C$-orial and a modular casework analysis, we now relate the generating function counting $C$-nomials
\[
T_{p,C}(\alpha(p)\cdot n + r, x) = \sum_{m=0}^{\alpha(p)n + r} x^{\nu_p\!\left( \binom{\alpha(p)n + r}{m}_C \right)}
\]
to generating functions counting binomials
$$\sum_{m=0}^{n} x^{\nu_p\!\left(\binom{n}{m}\right)}, \hspace{1cm} \sum_{m=0}^{n-1} x^{1+\nu_p\!\left(n\binom{n-1}{m}\right)},$$ 
which arise naturally as we analyze the $p$-adic valuation of the $C$-nomial. 

\begin{lemma}
\label{lemma:idealCnomialSummationInBinomials}
Let $p$ be an ideal prime for a strong divisibility sequence $C$. For any integer $n\geq0$, let $n=\alpha(p)n'+r$ where $0 \leq r <\alpha(p)$ and $ n' \in \N $. Then
$$\sum_{m=0}^{n} x^{\nu_p\!\left( \binom{n}{m}_C \right)} = (r+1)\sum_{m'=0}^{n'} x^{\nu_p\!\left(\binom{n'}{m'}\right)} + (\alpha(p) - r - 1)x^{s(p) - 1}\sum_{m'=0}^{n'-1} x^{1+\nu_p\!\left(n'\binom{n'-1}{m'}\right)}.$$
\end{lemma}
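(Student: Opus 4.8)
The plan is to expand the $p$-adic valuation of each $C$-nomial directly via Lemma~\ref{lemma:idealCorial} and then reorganize the sum according to the residue of $m$ modulo $\alpha(p)$. Writing each summation index as $m = \alpha(p)m' + t$ with $0 \le t < \alpha(p)$, we have $\lfloor m/\alpha(p)\rfloor = m'$ and $\lfloor n/\alpha(p)\rfloor = n'$. The one genuinely delicate point is the floor $\lfloor (n-m)/\alpha(p)\rfloor$, whose value depends on the sign of $r-t$: since $n - m = \alpha(p)(n'-m') + (r-t)$, one gets $\lfloor (n-m)/\alpha(p)\rfloor = n'-m'$ when $t \le r$, and $\lfloor (n-m)/\alpha(p)\rfloor = n'-m'-1$ when $t > r$ (using $0 < t-r < \alpha(p)$ in the latter case). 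I expect this case split to be the main obstacle, both to state cleanly and to track correctly through the endpoint bookkeeping below.

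With this in hand, I would substitute Lemma~\ref{lemma:idealCorial} into $\nu_p(\binom{n}{m}_C) = \nu_p(n!_C) - \nu_p(m!_C) - \nu_p((n-m)!_C)$ and simplify in each case. When $t \le r$, the $s(p)$-weighted floor terms cancel completely (their net coefficient is $n' - m' - (n'-m') = 0$), and the surviving factorial terms assemble into $\nu_p(n'!) - \nu_p(m'!) - \nu_p((n'-m')!) = \nu_p(\binom{n'}{m'})$. When $t > r$, the floor terms contribute a net $s(p)$, and the factorial terms become $\nu_p(n'!) - \nu_p(m'!) - \nu_p((n'-m'-1)!)$; recognizing $n'\binom{n'-1}{m'} = n'!/(m'!\,(n'-1-m')!)$ rewrites this as $s(p) + \nu_p\!\left(n'\binom{n'-1}{m'}\right)$. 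Thus each $C$-nomial valuation reduces to one of exactly two ordinary-binomial expressions, matching the two exponents $\nu_p(\binom{n'}{m'})$ and $(s(p)-1) + 1 + \nu_p(n'\binom{n'-1}{m'})$ appearing on the right-hand side.

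Finally I would count multiplicities. For each fixed $m' \in \{0,\dots,n'-1\}$ the index $t$ ranges over all of $\{0,\dots,\alpha(p)-1\}$, giving $r+1$ values in the regime $t \le r$ and $\alpha(p)-r-1$ values in the regime $t > r$; for $m' = n'$ the constraint $m \le n$ forces $t \le r$, so only the first regime occurs (contributing $r+1$ copies of $x^{\nu_p(\binom{n'}{n'})} = 1$). Summing over $m'$, the $t \le r$ contributions collect into $(r+1)\sum_{m'=0}^{n'} x^{\nu_p(\binom{n'}{m'})}$, where the upper limit reaches $n'$ precisely because of the $m'=n'$ boundary term, while the $t > r$ contributions collect into $(\alpha(p)-r-1)x^{s(p)-1}\sum_{m'=0}^{n'-1} x^{1+\nu_p(n'\binom{n'-1}{m'})}$, where the sum stops at $n'-1$ because $m'=n'$ admits no $t > r$. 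This yields the claimed identity, and a quick check that the total number of terms equals $\alpha(p)n' + r + 1 = n+1$ confirms the bookkeeping.
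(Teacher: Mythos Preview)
Your proposal is correct and follows essentially the same approach as the paper: both expand $\nu_p\!\left(\binom{n}{m}_C\right)$ via Lemma~\ref{lemma:idealCorial}, split into two cases according to whether the residue of $m$ modulo $\alpha(p)$ is at most $r$ or exceeds $r$ (the paper phrases this as $\lambda=0$ versus $\lambda=1$ after writing $m$ and $n-m$ each in the form $\alpha(p)m_i'+r_i$), obtain the two exponents $\nu_p\!\left(\binom{n'}{m'}\right)$ and $s(p)+\nu_p\!\left(n'\binom{n'-1}{m'}\right)$, and then count the $r+1$ and $\alpha(p)-r-1$ residues in each case with the same boundary observation that $m'=n'$ only arises in the first case. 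Your endpoint bookkeeping is if anything slightly more explicit than the paper's.
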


\begin{proof}
We first analyze the exponent of the left-hand-side $$\nu_p\!\left(\binom{n}{m}_C\right) = \nu_p(n!_C)-\nu_p(m!_C)-\nu_p\!\left((n-m)!_C\right).$$ 
It is convenient to write $ m_1 \colonequal m=\alpha(p) m_1' + r_1 $, and $m_2\colonequal(n-m)=\alpha(p)m_2'+r_2$, where $ 0 \leq r_1,r_2 < \alpha(p) $ and $ m_1',m_2' \in \N $, and consider
\begin{align*}
\nu_p\!\left(\binom{\alpha(p)n'+r}{\alpha(p)m_1'+r_1}_C\right) &= \nu_p\!\left((\alpha(p)n' + r)!_C\right)-\nu_p\!\left((\alpha(p)m_1' + r_1)!_C\right)-\nu_p\!\left((\alpha(p)m_2' + r_2)!_C\right).
\end{align*}
After substituting Lemma~\ref{lemma:idealCorial}, this becomes
\begin{align*}
    & s(p)\left(\left\lfloor\frac{\alpha(p)n'+r}{\alpha(p)}\right\rfloor-\left\lfloor\frac{\alpha(p)m_1'+r_1}{\alpha(p)}\right\rfloor-\left\lfloor\frac{\alpha(p)m_2' + r_2}{\alpha(p)}\right\rfloor\right) \\ & +\nu_p\left(\left\lfloor\frac{\alpha(p)n'+r}{\alpha(p)}\right\rfloor!\right) - \nu_p\left(\left\lfloor\frac{\alpha(p)m_1'+r_1}{\alpha(p)}\right\rfloor!\right) - \nu_p\left(\left\lfloor\frac{\alpha(p)m_2'+r_2}{\alpha(p)}\right\rfloor!\right).
\end{align*}
Simplifying each floor, this valuation equals
\begin{equation} \label{eq:valuationCnomial}
\nu_p\!\left(\binom{\alpha(p)n'+r}{\alpha(p)m_1'+r_1}_C\right)= s(p)\left(n'-m_1'-m_2'\right) + \nu_p\left(n'!\right) - \nu_p\left(m_1'!\right) - \nu_p\left(m_2'!\right).
\end{equation}
Our analysis now splits based the value of $\lambda:= n'-m_1'-m_2'$. 
Because $n=m_1+m_2$, we may state
$$r+\lambda\alpha(p)=r_1+r_2.$$
Therefore, by the bounds on $r$, $r_1$, and $r_2$, $\lambda$ may only take on the value of $0$ or $1$. 

The first case we consider is $\lambda=0$. In this case, $n'-m_1'-m_2'=0$. Thus Equation ~\eqref{eq:valuationCnomial} simplifies to
\begin{equation*} 
\nu_p\!\left(\binom{\alpha(p)n'+r}{\alpha(p)m_1'+r_1}_C\right)=
\nu_p\left(n'!\right) - \nu_p\left(m_1'!\right) - \nu_p\left(m_2'!\right) = \nu_p\left(\frac{n'!}{m_1'!m_2'!}\right) = \nu_p\!\left(\binom{n'!}{m_1'!}\right).
\end{equation*}
We substitute this for the right hand side of the $p$-adic valuation of the $C$-nomial, finding \begin{align*}
    \nu_p\!\left(\binom{\alpha(p)n'+r}{\alpha(p)m_1'+r_1}_C\right) &= \nu_p\!\left(\binom{n'}{m_1'}\right).
\end{align*}

The other case is $\lambda=1$.
In this case, $n'-m_1'-m_2'=1$. Thus the first addend on the right hand side of Equation ~\eqref{eq:valuationCnomial} evaluates to $s(p)$. Since $m_1'+m_2'=n-1$, notice that the latter addends simplify as 
$$\nu_p\left(n'!\right) - \nu_p\left(m_1'!\right) - \nu_p\left(m_2'!\right)= \nu_p\left(\frac{n' (n'-1)!}{m_1'! m_2'!}\right)= \nu_p\!\left(n'\binom {n'-1} {m_1'}\right).$$
Therefore, the $p$-adic valuation of the $C$-nomial in this case is
\begin{align*}
    \nu_p\!\left(\binom{\alpha(p)n'+r}{\alpha(p)m_1'+r_1}_C\right) &= s(p)+\nu_p\!\left(n'\binom{n'-1}{m_1'}\right).
\end{align*}

Now let's interpret what these cases mean and how they allow us to simplify the summation 
$$\sum_{m=0}^{n} x^{\nu_p\!\left( \binom{n}{m}_C \right)} = \sum_{\alpha(p)m_1'+r_1=0}^{\alpha(p)n'+r} x^{\nu_p\!\left(\binom{\alpha(p)n'+r}{\alpha(p)m+r_1}_C\right)}. $$ 
When $\lambda=0$, $r=r_1+r_2$. Therefore, by the bounds on $r_i$, $r_1\in\{0,1,\ldots,r\}$ and from the iterating variable $\alpha(p)m_1' + r_1$ running up to $\alpha(p)n'+r$, the valid values of $m_1'$ are $0 \leq m_1' \leq n'$. Therefore, this evaluates the summation in this case to $$(r+1)\sum_{m_1'=0}^{n'} x^{\nu_p\!\left(\binom{n'}{m_1'}\right)}.$$
When $\lambda=1$, $r+\alpha(p)=r_1+r_2$. In this case, valid values are $r_1\in\{r+1, \ldots,\alpha(p)-1\}$, and from the iterating variable $\alpha(p)m_1' + r_1$ running up to $\alpha(p)n'+r$, the valid values of $m_1'$ are $0 \leq m_1' \leq n'-1$. Therefore, this evaluates the summation in this case to $$(\alpha(p)-r-1)\sum_{m_1'=0}^{n'-1} x^{s(p)+\nu_p\!\left(n'\binom{n'}{m_1'}\right)}.$$
Adding these two components completes the proof.
\end{proof}

The proof of Theorem~\ref{thm:ideal_cnomial}, which states \[
T_{p,C}(\alpha(p) \cdot n + r, x)
= \begin{bmatrix}
    r+1&(\alpha(p) - r - 1)x^{s(p) - 1}
\end{bmatrix} M_p(n_0) M_p(n_1) \cdots M_p(n_\ell) \begin{bmatrix} 1 \\ 0 \end{bmatrix}
\] follows from connecting Lemma~\ref{lemma:idealCnomialSummationInBinomials} to a linear algebra-powered efficiency improvement.

\begin{proof}[Proof of Theorem~\ref{thm:ideal_cnomial}]
For the midst of his proof ~\cite[Theorem 1]{Rowland2018}, Rowland essentially showed \begin{equation} \label{eq:RowlandPartialProduct}    
M_p(n_0) M_p(n_1) \cdots M_p(n_\ell) \begin{bmatrix} 1 \\ 0 \end{bmatrix} = \begin{bmatrix} \sum\limits_{m=0}^{n} x^{\nu_p\!\left(\binom{n}{m}\right)} \\ \sum\limits_{m=0}^{n-1} x^{1+\nu_p\!\left(n\binom{n-1}{m}\right)} \end{bmatrix}.
\end{equation}

In the binomial case as Rowland studied, a leading factor of the vector $\begin{bmatrix}
    1 & 0
\end{bmatrix}$ on the product was sufficient to isolate the requested generating function counting binomials. In our generalized case, this vector changes to encompass the complexity of a general strong divisibility sequence. Surprisingly however, we only need to change this vector; we can build off the same matrices Rowland used to make up the latter partial product. In other words, the matrices are independent of $C$. Specifically, rewriting the right hand side of Lemma~\ref{lemma:idealCnomialSummationInBinomials} with the coefficients as the entries of the vector, we have \[
 \sum_{m=0}^{\alpha(p)n+r} x^{\nu_p\!\left( \binom{\alpha(p)n+r}{m}_C \right)}= \begin{bmatrix}
    r+1&(\alpha(p) - r - 1)x^{s(p) - 1}
\end{bmatrix}\begin{bmatrix} \sum\limits_{m=0}^{n} x^{\nu_p\!\left(\binom{n}{m}\right)} \\ \sum\limits_{m=0}^{n-1} x^{1+\nu_p\!\left(n\binom{n-1}{m}\right)} \end{bmatrix}.
\]
Notice the left-hand-side summation is $T_{p,C}(\alpha(p)\cdot n+r,x)$. Substituting this and the right hand side's latter factor with Equation \eqref{eq:RowlandPartialProduct} completes the matrix product formula for counting $C$-nomial coefficients according to their $p$-adic valuation for an ideal prime $p$ as \[
 T_{p,C}(\alpha(p)\cdot n+r,x) = \begin{bmatrix}
    r+1&(\alpha(p) - r - 1)x^{s(p) - 1}
\end{bmatrix}M_p(n_0) M_p(n_1) \cdots M_p(n_\ell) \begin{bmatrix} 1 \\ 0 \end{bmatrix}.
\]
\end{proof}
We continue by extending the theorem to its $C$-multinomial analogue. Rowland defined a series of generating functions counting binomials supplemental to his multinomial proof. Specifically, for $k\geq2$ and each $\lambda$ in the range $0 \leq \lambda\leq k-1$, $$T_{p,k,\lambda}(n,x)\colonequal\begin{cases}
    0&\text{if }0\leq n\leq \lambda-1
    \\
    x^{\nu_p\!\left(\frac{n!}{(n-\lambda)!}\right)+\lambda}T_{p,k}(n-\lambda,x)&\text{if }n\geq \lambda.
\end{cases}$$
Recall \[
T_{p,k}(n-\lambda, x) = \sum_{\substack{\mathbf{m} \in \mathbb{N}^k \\ \total\mathbf{m} = n-\lambda}} x^{\nu_p\!\left(\binom{n-\lambda}{m_1,\, m_2,\, \dots,\, m_k}\right)}.
\] and note $T_{p,k,0}(n,x)=T_{p,k}(n,x)$. 
We further use the coefficient defined in Section~\ref{section:introduction}. Namely, for each $0\leq \lambda \leq k-1$, let $$u_{p,k,C,\lambda}(r)=x^{(s(p)-1)\lambda}\sum_{j=0}^{\lambda}(-1)^j\binom{k}{j}\binom{r+(\lambda-j)\alpha(p)+k-1}{k-1}.$$

\begin{lemma} % [$C$-nomial generating function in terms of binomial generating functions]
\label{lemma:idealCmultinomialSummationInBinomials}
Let $p$ be an ideal prime for a strong divisibility sequence $C$ and integer $k\geq2$. For any integer $n\geq0$, let $n=\alpha(p)n'+r$ where  $0 \leq r <\alpha(p)$ and $ n' \in \N $. Then
\[\sum_{\substack{\mathbf{m} \in \mathbb{N}^k \\ \total\mathbf{m} = n}} x^{\nu_p\!\left(\binom{n}{m_1,\, m_2,\, \dots,\, m_k}_C\right)} = \sum\limits_{\lambda=0}^{k-1}u_{p,k,C,\lambda}(r)T_{p,k,\lambda}(n,x).\]
\end{lemma}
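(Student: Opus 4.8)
The plan is to generalize the modular casework analysis from the proof of Lemma~\ref{lemma:idealCnomialSummationInBinomials} from two parts ($m$ and $n-m$) to the $k$ parts $m_1, m_2, \dots, m_k$. First I would write each part in base-$\alpha(p)$ form as $m_i = \alpha(p)m_i' + r_i$ with $0 \leq r_i < \alpha(p)$ and $m_i' \in \N$. Applying Lemma~\ref{lemma:idealCorial} to each $C$-orial in the expansion
\[
\nu_p\!\left(\binom{n}{m_1,\dots,m_k}_C\right) = \nu_p(n!_C) - \sum_{i=1}^k \nu_p(m_i!_C),
\]
and simplifying the floors exactly as in the $C$-nomial case, I expect to obtain
\[
\nu_p\!\left(\binom{n}{m_1,\dots,m_k}_C\right) = s(p)\lambda + \nu_p\!\left(\binom{n'}{m_1',\dots,m_k'}\right),
\]
where $\lambda := n' - (m_1' + \cdots + m_k')$. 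The relation $\total\mathbf{m} = n$ forces $r_1 + \cdots + r_k = r + \lambda\alpha(p)$, and since each $r_i$ ranges in $[0,\alpha(p))$ while $r \in [0,\alpha(p))$, the value $\lambda$ is now constrained to $0 \leq \lambda \leq k-1$ (rather than just $\{0,1\}$ as in the two-part case).

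Next I would group the left-hand sum by the value of $\lambda$. For each fixed $\lambda$, the inner sum factors into a combinatorial count of the residue tuples $(r_1,\dots,r_k)$ with $\total\mathbf{r} = r + \lambda\alpha(p)$ times a sum over the quotient tuples $(m_1',\dots,m_k')$ with $\total\mathbf{m}' = n' - \lambda$. The quotient sum is precisely $x^{s(p)\lambda} T_{p,k}(n'-\lambda, x)$ after absorbing the $s(p)\lambda$ shift in the valuation. I would then recognize, by comparing with the definition of $T_{p,k,\lambda}$, that this equals $u_{p,k,C,\lambda}(r) T_{p,k,\lambda}(n,x)$ once the residue count matches the claimed coefficient. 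The crux is thus verifying the identity
\[
\#\{\mathbf{r} \in \{0,\dots,\alpha(p)-1\}^k : \total\mathbf{r} = r + \lambda\alpha(p)\} = \sum_{j=0}^{\lambda}(-1)^j\binom{k}{j}\binom{r+(\lambda-j)\alpha(p)+k-1}{k-1},
\]
together with checking that the power-of-$x$ bookkeeping aligns: the factor $x^{(s(p)-1)\lambda}$ in $u_{p,k,C,\lambda}(r)$ combined with the $x^{\nu_p(n'!/(n'-\lambda)!)+\lambda}$ inside $T_{p,k,\lambda}$ must reproduce the $x^{s(p)\lambda}$ shift, using $\nu_p(n'!/(n'-\lambda)!) = \nu_p(n'!) - \nu_p((n'-\lambda)!)$.

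I expect the residue-counting identity to be the main obstacle, since it replaces the trivial two-case split of the $C$-nomial proof with a genuine inclusion–exclusion computation. The count of nonnegative integer solutions to $r_1 + \cdots + r_k = r + \lambda\alpha(p)$ with no upper bound is $\binom{r+\lambda\alpha(p)+k-1}{k-1}$, and imposing each upper bound $r_i < \alpha(p)$ via inclusion–exclusion (subtracting $\alpha(p)$ from $j$ of the variables in $\binom{k}{j}$ ways) yields exactly the alternating sum above; the range $0 \leq \lambda \leq k-1$ ensures at most $\lambda$ variables can simultaneously exceed the bound, so the sum truncates at $j = \lambda$ as written. The remaining steps—the valuation simplification via Lemma~\ref{lemma:idealCorial} and the factoring of the sum by $\lambda$—are direct extensions of the $C$-nomial argument and should present no difficulty beyond careful index tracking.
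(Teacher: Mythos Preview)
Your plan matches the paper's proof exactly: write $m_i = \alpha(p)m_i' + r_i$, apply Lemma~\ref{lemma:idealCorial}, split by $\lambda = n' - \total\mathbf{m}' \in \{0,\dots,k-1\}$, count residue tuples by inclusion--exclusion, and identify the quotient sum with $T_{p,k,\lambda}(n',x)$. One point to tidy: your displayed valuation $s(p)\lambda + \nu_p\!\left(\binom{n'}{m_1',\dots,m_k'}\right)$ is ill-posed for $\lambda>0$ since $\total\mathbf{m}'=n'-\lambda\neq n'$; the correct form (which the paper writes and which your own bookkeeping paragraph anticipates) is $s(p)\lambda + \nu_p\!\left(\tfrac{n'!}{(n'-\lambda)!}\right) + \nu_p\!\left(\binom{n'-\lambda}{m_1',\dots,m_k'}\right)$, so the quotient sum carries an extra factor $x^{\nu_p(n'!/(n'-\lambda)!)}$ and the power-of-$x$ check $(s(p)-1)\lambda + \nu_p\!\left(\tfrac{n'!}{(n'-\lambda)!}\right) + \lambda = s(p)\lambda + \nu_p\!\left(\tfrac{n'!}{(n'-\lambda)!}\right)$ then goes through cleanly.
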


\begin{proof}

We similarly notate $m_i\colonequal m_i'\alpha(p)+r_i$ where $ 0 \leq r_i < \alpha(p) $ and $ m_i' \in \N $ for each $1\leq i\leq k$.  Similar to Equation \eqref{eq:valuationCnomial}, the $p$-adic valuation of the $C$-multinomial may be written as $$\nu_p\!\left(\binom{n}{m_1, m_2, \cdots,m_k}_C\right)=s(p)\left(n'-\total \mathbf{m'}\right)+\nu_p(n'!)-\sum\nu_p(m_i'!).$$ 

Analogously to our proof of Lemma~\ref{lemma:idealCnomialSummationInBinomials}, our casework is split based on the value of $\lambda=n'-\total \mathbf{m'}$. Specifically, since $n=\total \mathbf{m}$, we may state
$$r+\lambda\alpha(p)=\total \mathbf{r}.$$ 
Therefore, by the bounds on $r$ and $r_i$, $\lambda$ can take on values in $\{0,1,\ldots,k-1\}$. We analyze each by considering $\lambda$ as a variable. 
Because $$\nu_p(n'!)-\sum\nu_p(m_i'!)=\nu_p\!\left(\frac{n'!}{(n'-\lambda)!}\right) + \nu_p\!\left(\binom{n'-\lambda}{m_1', m_2', \cdots,m_k'}\right),$$ the $p$-adic valuation of the $C$-multinomial is then \begin{equation}\label{eq:idealValuationCmulti}
s(p)\lambda+\nu_p\!\left(\frac{n'!}{(n'-\lambda)!}\right) + \nu_p\!\left(\binom{n'-\lambda}{m_1', m_2', \cdots,m_k'}\right).    
\end{equation}
This allows us to simplify the summation 
\begin{align*}
\sum_{\substack{\mathbf{m}\in\N^k \\ \total\mathbf{m}= n}} & x^{\nu_p\!\left(\binom{n}{m_1,\, m_2,\, \dots,\, m_k}_C\right)} 
\end{align*}
by considering the valuation of a $C$-multinomial in terms of the valuation of multinomials.

For this, we must count the number of tuples $\mathbf{r}\in\{0, 1, \ldots \alpha(p)-1\}^k$ such that $\total \mathbf{r}=r+\lambda\alpha(p)$. This is a type of stars-and-bars counting problem which can be enumerated via the principal of inclusion-exclusion, finding the number of tuples as 
\begin{equation} \label{eq:count_tuples}
\sum_{j=0}^{\lambda}(-1)^j\binom{k}{j}\binom{r+(\lambda-j)\alpha(p)+k-1}{k-1}.
\end{equation}
This count is multiplied atop 
\begin{equation} \label{eq:multinomial_summation}
x^{\nu_p\!\left(\frac{n'!}{(n'-\lambda)!}\right)+ s(p)\lambda}\sum_{\substack{\mathbf{m'}\in\N^k \\ \total\mathbf{m'}= n'-\lambda}} x^{\nu_p(\binom{n'-\lambda}{m_1',\, m_2',\, \dots,\, m_k'})}    
\end{equation}
which comes from Equation \eqref{eq:idealValuationCmulti}, since for every tuple $\mathbf{m'}$ where $\total\mathbf{m'}=n'-\lambda$, each tuple $\mathbf{r}$ where $\total\mathbf{r}=r+\lambda\alpha(p)$ corresponds to a unique tuple $\mathbf{m}$ where $\total\mathbf{m} = n$. 
Doing this for every $\lambda\in\{0, 1,\ldots,k-1\}$ enumerates every tuple covered in the $C$-nomial generating function summation. Therefore, adding the product of \eqref{eq:count_tuples} and \eqref{eq:multinomial_summation} for every $\lambda$ completes the proof.
\end{proof}
This leads easily into the proof of Theorem~\ref{thm:ideal_cmultinomial}, which states \[
T_{p,k,C}(\alpha(p) \cdot n + r, x)
= u_{p,k,C}(r) M_{p,k}(n_0) M_{p,k}(n_1) \cdots M_{p,k}(n_\ell) e^\intercal
\] where $e=\begin{bmatrix}
    1&0&0&\cdots&0
\end{bmatrix}\in\mathbb{Z}^k$, by analyzing a section of Rowland's multinomial proof ~\cite[Theorem 3]{Rowland2018}.

\begin{proof}[Proof of Theorem~\ref{thm:ideal_cmultinomial}]
For his definition of $M_{p, k}(d)$, the product $$M_{p,k}(n_0) M_{p,k}(n_1) \cdots M_{p,k}(n_\ell) e^\intercal,$$ for any $ n \geq 0 $ with the standard base-$p$ representation of $n = n_{\ell}\cdots n_1n_0$, is equivalent to the column vector \[
\begin{bmatrix}
    T_{p,k,0}(n,x)
    \\
    T_{p,k,1}(n,x)
    \\
    \vdots
    \\
    T_{p,k,k}(n,x)
\end{bmatrix}
\]
of multinomial generating functions. Multiplying the row vector $$u_{p,k,C}(r)=\begin{bmatrix}
    u_{p,k,C,0}(r)&u_{p,k,C,1}(r) & \cdots & u_{p,k,C,k-1}(r)
\end{bmatrix}$$
by the column vector above precisely matches Lemma~\ref{lemma:idealCmultinomialSummationInBinomials}, completing the matrix product formula for counting $C$-multinomial coefficients according to their $p$-adic valuation for an ideal prime $p$.
\end{proof}
\section{Acceptable Primes}\label{section:acceptable} For an acceptable prime $p$, the matrix product formula for counting $C$-nomials are not essentially simpler than the formula for counting $C$-multinomials. We thus do not include a separate proof, as plugging in $k=2$ reduces to the binomial analogue. 

The rank of apparitions for powers of $p$ follow that, for $2 \leq k \leq s(p)$, the apparition of $p^k$, $\alpha(p^k)$, may be any value, as long as it is a multiple of the apparition of the previous power $\alpha(p^{k-1})$.  For $k > s(p)$, similar to ideal primes, each rank of apparition is $p$ times the previous, or $\alpha(p^k)=p\alpha(p^{k-1})$. In other words, for an acceptable prime $p$, we have
\begin{align*}
\alpha(p^{k-1})\mid\alpha(p^k) \quad &\text{if } 2 \leq k \leq s(p) \\
\alpha(p^k)=p^{k-s(p)}\alpha(p^{s(p)}) \quad&\text{if } k > s(p).
\end{align*}
The Legendre-like formula for acceptable primes is similar but less specific as we expand into more general behavior for $\alpha(p^k)$. 

\begin{lemma}
\label{lemma:acceptableCorial}
Let $ p $ be an acceptable prime for a strong divisibility sequence $ C $. Then for any integer $ n \geq 0 $,
\[
\nu_p(n!_C) = \sum_{1\leq k\leq s(p)} \left\lfloor \frac{n}{\alpha(p^k)} \right\rfloor + \nu_p\left( \left\lfloor \frac{n}{\alpha(p)} \right\rfloor ! \right).
\]
\end{lemma}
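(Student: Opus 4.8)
The plan is to specialize the general Legendre-like identity of Proposition~\ref{prop:Corial} to an acceptable prime, following the same two-step split used in the proof of Lemma~\ref{lemma:idealCorial} but retaining the head of the sum. First I would write
\[
\nu_p(n!_C) = \sum_{1 \le k \le s(p)} \left\lfloor \frac{n}{\alpha(p^k)} \right\rfloor + \sum_{k > s(p)} \left\lfloor \frac{n}{\alpha(p^k)} \right\rfloor,
\]
cutting the index set at the threshold $k = s(p)$. The one genuine departure from the ideal case is that the head sum cannot be collapsed: for an acceptable prime the ranks $\alpha(p^k)$ with $1 \le k \le s(p)$ need only satisfy $\alpha(p^{k-1}) \mid \alpha(p^k)$, so they need not all equal $\alpha(p)$ and this sum is left exactly as written.

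For the tail I would insert the acceptable-prime formula $\alpha(p^k) = p^{k - s(p)} \alpha(p^{s(p)})$, which holds for all $k > s(p)$, and reindex by $j = k - s(p) \ge 1$. Writing $m = \left\lfloor n / \alpha(p^{s(p)}) \right\rfloor$ and applying the nested-floor identity $\left\lfloor \left\lfloor n/a \right\rfloor / p^j \right\rfloor = \left\lfloor n / (p^j a) \right\rfloor$ with $a = \alpha(p^{s(p)})$, the tail becomes $\sum_{j \ge 1} \left\lfloor m / p^j \right\rfloor$, which is $\nu_p(m!)$ by Legendre's formula. Adding the head and tail then expresses the valuation as the stated head sum plus $\nu_p(m!)$.

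I do not anticipate a serious obstacle, since this is a direct specialization mirroring the ideal proof step for step; the substitution, reindexing, and Legendre step are all routine. The single point requiring care is the argument of the inner factorial: the reindexed tail divides $n$ by the stabilized rank $\alpha(p^{s(p)})$ reached at the threshold, so the factorial term that emerges is $\nu_p\!\left( \left\lfloor n / \alpha(p^{s(p)}) \right\rfloor ! \right)$. This coincides with $\nu_p\!\left( \left\lfloor n / \alpha(p) \right\rfloor ! \right)$ exactly when $\alpha(p^{s(p)}) = \alpha(p)$, i.e.\ in the ideal case, recovering Lemma~\ref{lemma:idealCorial}; for a genuinely acceptable prime the two ranks can differ---for the Fibonacci prime $p = 2$ one has $\alpha(2) = 3$ but $\alpha(2^{s(2)}) = \alpha(8) = 6$---so one should verify that it is the stabilized rank $\alpha(p^{s(p)})$ that governs the inner factorial before applying Legendre's formula.
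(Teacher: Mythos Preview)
Your approach is correct and is essentially identical to the paper's own proof: split the sum from Proposition~\ref{prop:Corial} at $k=s(p)$, leave the head intact, substitute $\alpha(p^k)=p^{k-s(p)}\alpha(p^{s(p)})$ in the tail, and collapse with Legendre's formula. You are also right to flag the inner factorial: the paper's own proof concludes with $\nu_p\!\bigl(\lfloor n/\alpha(p^{s(p)})\rfloor!\bigr)$ rather than $\nu_p\!\bigl(\lfloor n/\alpha(p)\rfloor!\bigr)$, and every subsequent use of the lemma (writing $n=\alpha(p^{s(p)})n'+r$) relies on the $\alpha(p^{s(p)})$ form, so the $\alpha(p)$ appearing in the displayed statement is a typo in the paper.
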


\begin{proof}
We similarly split the summation in Proposition \ref{prop:Corial} based on $k$ as $$\nu_p\left(n!_C\right) = \sum_{1\leq k \leq s(p)} \left\lfloor\frac{n}{\alpha(p^k)}\right\rfloor + \sum_{k > s(p)} \left\lfloor\frac{n}{\alpha(p^k)}\right\rfloor.$$
Given that $p$ is acceptable, we may use $\alpha(p^k)$'s form to substitute in the latter addend
$$\nu_p\left(n!_C\right) = \sum_{1\leq k \leq s(p)} \left\lfloor\frac{n}{\alpha(p^k)}\right\rfloor + \sum_{k > s(p)} \left\lfloor\frac{n}{p^{k-s(p)} \alpha(p^{s(p)})}\right\rfloor.$$
The latter term can be similarly simplified as with ideal primes using Legendre's formula  
$$\sum\limits_{k > s(p)} \left\lfloor\frac{n}{p^{k-s(p)} \alpha(p^{s(p)})}\right\rfloor = \nu_p\left(\left\lfloor\frac{n}{\alpha(p^{s(p)})}\right\rfloor!\right),$$
which completes the proof.
\end{proof}

Notably, we now have not only $$\left\lfloor\frac{n}{\alpha(p)}\right\rfloor$$ but all of $$\left\lfloor\frac{n}{\alpha(p)}\right\rfloor, \left\lfloor\frac{n}{\alpha(p^{2})}\right\rfloor, \ldots, \left\lfloor\frac{n}{\alpha(p^{s(p)})}\right\rfloor$$
present within our $p$-adic valuation of a $C$-orial for an acceptable prime $p$.
This presents a difficulty when simplifying the $p$-adic valuation of a $C$-multinomial. Namely, our casework requires not only comparing each $m_i \bmod \alpha(p)$ vs.\ $n \bmod \alpha(p)$
as before, but also 
$m_i \bmod \alpha(p^2)$ vs.\ $n \bmod \alpha(p^2)$, $\ldots$, $m_i \bmod \alpha(p^{s(p)})$ vs.\ $n \bmod \alpha(p^{s(p)})$.

To address this, we use the initial vector as defined in Section~\ref{section:introduction}. Namely, let
\[
f_{p,k,C,\lambda}(r,\mathbf{r})=\lambda\sum_{j=1}^{s(p)}\frac{\alpha(p^{s(p)})}{\alpha(p^{j})} + \sum_{j=1}^{s(p)}\left( \left\lfloor\frac{r}{\alpha(p^{j})} \right\rfloor - \sum_{i=1}^k\left\lfloor\frac{r_i}{\alpha(p^{j})} \right\rfloor \right).
\]  
Further, for integer $k \geq 2$, we define a coefficient for each $0 \leq \lambda\leq k-1$
$$u_{p,k,C,\lambda}(r)=\sum_{\substack{
\mathbf{r}\in \{0, 1, \ldots,\alpha(p^{s(p)})-1\}^k \\ 
\total\mathbf{r}
= r+i\alpha(p^{s(p)})}}x^{f_{p,k,C,\lambda}(r, \mathbf{r})-\lambda}.$$

\begin{lemma}\label{lemma:acceptableCmultinomialSummationInBinomial}
Let $p$ be an acceptable prime for a strong divisibility sequence $C$ and integer $k\geq2$. For any integer $n\geq0$, let $n=\alpha(p^{s(p)})n'+r$ where $ n' \in \N $ and $0 \leq r <\alpha(p^{s(p)})$. Then
\[\sum_{\substack{\mathbf{m} \in \N^k \\ \total\mathbf{m} = n}} x^{\nu_p\!\left(\binom{n}{m_1,\, m_2,\, \dots,\, m_k}_C\right)} = \sum\limits_{i=0}^{k-1}u_{p,k,C,i}(r)T_{p,k,i}(n,x).\]
\end{lemma}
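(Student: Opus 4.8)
The plan is to mirror the proof of Lemma~\ref{lemma:idealCmultinomialSummationInBinomials}, adapting it to the coarser division forced by the acceptable Legendre-like formula. First I would write each $m_i = \alpha(p^{s(p)})m_i' + r_i$ with $0 \le r_i < \alpha(p^{s(p)})$ and $m_i' \in \N$, matching the given decomposition $n = \alpha(p^{s(p)})n' + r$. Applying Lemma~\ref{lemma:acceptableCorial} to $\nu_p(n!_C)$ and to each $\nu_p(m_i!_C)$ and subtracting, the valuation $\nu_p\!\left(\binom{n}{m_1,\dots,m_k}_C\right)$ splits as the floor-difference sum $\sum_{1 \le j \le s(p)}\left(\lfloor n/\alpha(p^j)\rfloor - \sum_i \lfloor m_i/\alpha(p^j)\rfloor\right)$ plus the factorial part $\nu_p(n'!) - \sum_i \nu_p(m_i'!)$, where $n' = \lfloor n/\alpha(p^{s(p)})\rfloor$ and $m_i' = \lfloor m_i/\alpha(p^{s(p)})\rfloor$.

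The central computation is the simplification of the floor-difference sum. Because $p$ is acceptable, the divisibility chain gives $\alpha(p^j) \mid \alpha(p^{s(p)})$ for every $1 \le j \le s(p)$; setting $q_j = \alpha(p^{s(p)})/\alpha(p^j) \in \N$ and using $\lfloor(\alpha(p^{s(p)})N + R)/\alpha(p^j)\rfloor = q_j N + \lfloor R/\alpha(p^j)\rfloor$ for $0 \le R < \alpha(p^{s(p)})$, each summand separates into $q_j\lambda$ (with $\lambda := n' - \total\mathbf{m'}$) plus the remainder contribution $\lfloor r/\alpha(p^j)\rfloor - \sum_i \lfloor r_i/\alpha(p^j)\rfloor$. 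Summing over $j$ reproduces exactly $f_{p,k,C,\lambda}(r,\mathbf{r})$ from its definition. Together with the factorial identity $\nu_p(n'!) - \sum_i \nu_p(m_i'!) = \nu_p\!\left(n'!/(n'-\lambda)!\right) + \nu_p\!\left(\binom{n'-\lambda}{m_1',\dots,m_k'}\right)$, valid since $\total\mathbf{m'} = n' - \lambda$, this yields $\nu_p\!\left(\binom{n}{m_1,\dots,m_k}_C\right) = f_{p,k,C,\lambda}(r,\mathbf{r}) + \nu_p\!\left(n'!/(n'-\lambda)!\right) + \nu_p\!\left(\binom{n'-\lambda}{m_1',\dots,m_k'}\right)$.

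I would then partition the outer summation by the value of $\lambda$. Since $\total\mathbf{m} = n$ forces $\total\mathbf{r} = r + \lambda\alpha(p^{s(p)})$, the bounds on $r$ and the $r_i$ confine $\lambda$ to $\{0,1,\dots,k-1\}$, and for fixed $\lambda$ the two conditions $\total\mathbf{m'} = n' - \lambda$ and $\total\mathbf{r} = r + \lambda\alpha(p^{s(p)})$ constrain $\mathbf{m'}$ and $\mathbf{r}$ independently. The point of genuine departure from the ideal case, and the main obstacle, is that here $f_{p,k,C,\lambda}(r,\mathbf{r})$ truly depends on $\mathbf{r}$ through the floor terms, so the inner sum over $\mathbf{r}$ is no longer a mere tuple count but the honest polynomial $\sum_{\mathbf{r}} x^{f_{p,k,C,\lambda}(r,\mathbf{r})}$. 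Because the exponent factors into an $\mathbf{r}$-only term, the $\lambda$-constant $\nu_p(n'!/(n'-\lambda)!)$, and an $\mathbf{m'}$-only term, the double sum over $(\mathbf{m'},\mathbf{r})$ factors as a product of independent sums. Matching against the definitions, where the $-\lambda$ inside $u_{p,k,C,\lambda}(r)$ cancels the $+\lambda$ inside $T_{p,k,\lambda}$, and the degenerate case $n' < \lambda$ with empty $\mathbf{m'}$ sum agrees with $T_{p,k,\lambda} = 0$, identifies each $\lambda$-block with $u_{p,k,C,\lambda}(r)\,T_{p,k,\lambda}(n',x)$. Summing over $\lambda$ from $0$ to $k-1$ then gives the claimed identity.
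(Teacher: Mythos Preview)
Your proposal is correct and follows essentially the same approach as the paper's proof: decompose each $m_i$ modulo $\alpha(p^{s(p)})$, apply Lemma~\ref{lemma:acceptableCorial} to split the valuation into the floor-difference term $f_{p,k,C,\lambda}(r,\mathbf{r})$ plus the multinomial factorial part, partition by $\lambda = n' - \total\mathbf{m'}$, and factor the resulting double sum into the product $u_{p,k,C,\lambda}(r)\,T_{p,k,\lambda}(n',x)$. Your write-up is in fact more explicit than the paper's (e.g., the divisibility-chain computation showing the floor sum equals $f$, and the $-\lambda$/$+\lambda$ cancellation), and your use of $n'$ rather than $n$ as the argument of $T_{p,k,\lambda}$ is the correct one---the lemma statement as printed carries a typo that the proof of Theorem~\ref{thm:acceptable_cmultinomial} silently resolves.
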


\begin{proof}
We rely on generalizing the many similarities to the proof of Lemma~\ref{lemma:idealCmultinomialSummationInBinomials}. 
First, we notate $$m_i\colonequal\alpha(p^{s(p)})m_i'+r_i$$ where $ 0 \leq r_i < \alpha(p^{s(p)}) $ and $ m_i' \in \N $ for each $1 \leq i \leq k$. 
We start by updating the $p$-adic valuation of the $C$-multinomial given that $p$ is acceptable. Recall for an ideal prime, this valuation was \eqref{eq:idealValuationCmulti} $$s(p)\lambda+\nu_p\!\left(\frac{n'!}{(n'-\lambda)!}\right) + \nu_p\!\left(\binom{n'-\lambda}{m_1', m_2', \dots,m_k'}\right). $$ 
While the latter addends are still present in the acceptable valuation, the first addend $s(p)\lambda$ is not. 
Instead, that addend is replaced with 
$$f_{p,k,C,i}(r,\mathbf{r})=\lambda\sum_{j=1}^{s(p)}\frac{\alpha(p^{s(p)})}{\alpha(p^{j})} + \sum_{j=1}^{s(p)}\left( \left\lfloor\frac{r}{\alpha(p^{j})} \right\rfloor - \sum_{h=1}^k\left\lfloor\frac{r_h}{\alpha(p^{j})} \right\rfloor \right)
$$ 
based on the summation in Lemma~\ref{lemma:acceptableCorial} evaluated over $$\nu_p\!\left(\binom{n}{m_1,m_2,\dots,m_k}_C\right)=\nu_p(n!_C)-\nu_p(m_1!_C)-\nu_p(m_2!_C)-\cdots-\nu_p(m_k!_C).$$ 

Like Lemma~\ref{lemma:idealCmultinomialSummationInBinomials}, enumerating every valid tuple $\mathbf{r}$ and $\mathbf{m'}$ combines to every tuple $\mathbf{m}$. Therefore, the product of $$\sum_{\substack{\mathbf{r} \in \{0, 1, \ldots,\alpha(p^{s(p)})-1\}^k \\ \total\mathbf{r} = r+i\alpha(p^{s(p)})}}x^{f(r, r_i)}$$ and 
$$x^{\nu_p\!\left(\frac{n'!}{(n'-\lambda)!}\right)}\sum_{\substack{\mathbf{m'} \in \mathbb{N}^k \\ \total\mathbf{m'} = n'-\lambda}} x^{\nu_p\!\left(\binom{n'-\lambda}{m_1',\, m_2',\, \dots,\, m_k'}\right)}$$
for each $0 \leq \lambda \leq k-1$ is equivalent to $$\sum_{\substack{\mathbf{m} \in \mathbb{N}^k \\ \total\mathbf{m} = n}} x^{\nu_p\left(\binom{n}{m_1,\, m_2,\, \dots,\, m_k}_C\right)}$$ since the coefficients match for each exponent on $x$, counting the $p$-adic valuation of $C$-multinomials.  This summation of products completes the proof.
\end{proof}

The proof of Theorem \ref{thm:acceptable_cmultinomial} which states \[
T_{p,k,C}(\alpha(p^{s(p)}) \cdot n + r, x)
= u_{p,k,C}(r) M_{p,k}(n_0) M_{p,k}(n_1) \cdots M_{p,k}(n_\ell) e^\intercal
\] follows shortly. We do not separately prove Theorem \ref{thm:acceptable_cnomial} as it is subcase of this theorem where $k=2$.

\begin{proof}[Proof of Theorem \ref{thm:acceptable_cmultinomial}]
We reuse the column vector of multinomial generating functions from the proof of Theorem \ref{thm:ideal_cmultinomial} and multiply it with the row vector using the coefficients $$u_{p,k,C}(r)=\begin{bmatrix}
    u_{p,k,C,0}(r)&u_{p,k,C,1}(r) & \cdots & u_{p,k,C,k-1}(r)
\end{bmatrix}.$$ This completes the matrix product formula for counting $C$-multinomial coefficients according to their $p$-adic valuation for an acceptable prime $p$. 
\end{proof}
This is the most general statement our current definitions are capable of finding a formula for. It would be interesting to see if there is another way to count the coefficient $u_{p,k,C,i}(r)$ for acceptable primes, perhaps using the principle of inclusion-exclusion repeatedly over each $\alpha(p^j)$ for $1 \leq j \leq s(p)$. The universality of the matrices, namely their independence on the sequence $C$, presents potentially surprising conclusions regarding generalized binomial coefficients in other combinatorial number theory questions.

\section*{Acknowledgements}
I thank Dr.~Eric Rowland for his invaluable guidance in techniques and motivation across the creation of this work. I also thank Dr.~Adam Tagert and Mrs.~Shavon~Donnell from the National Security Agency (NSA) for awarding me the NSA Research Directorate and the American Mathematical Society (AMS) for awarding me the Karl Menger Award at the 2024 International Science and Engineering Fair (ISEF), and the Mathematics Category judges for awarding me the 4th Place Grand Award at the 2025 ISEF. Finally, I thank Dr.~Michael Lake for inspiring me to explore mathematics research in the first place.


\begin{thebibliography}{9}

\bibitem{AlloucheShallit1992}
Allouche, J.-P., \& Shallit, J. (1992). The ring of $ k $-regular sequences. \textit{Theoretical Computer Science}, \textbf{98}, 163--197.

\bibitem{Borwein1985}
Borwein, P. B. (1985). On the complexity of calculating factorials. \textit{Journal of Algorithms}, \textbf{6}(3), 376--380. 

\bibitem{Fine1947}
Fine, N. (1947). Binomial coefficients modulo a prime. \textit{The American Mathematical Monthly}, \textbf{54}(10), 589--592.

\bibitem{TAOCP}
Knuth, D. E. (1969). \textit{The Art of Computer Programming, \textbf{Vol. 2}: Seminumerical Algorithms}. Reading, MA: Addison-Wesley.

\bibitem{KnuthWilf1989}
Knuth, D. E., \& Wilf, H. S. (1989). The power of a prime that divides a generalized binomial coefficient. \textit{Journal für die reine und angewandte Mathematik}, \textbf{396}, 212--219. 

\bibitem{Rowland2018}
Rowland, E. (2018). A matrix generalization of a theorem of Fine. \textit{Integers}, \textbf{18A}, Article 18.

\bibitem{OEIS}
Sloane, N., et al.
The On-Line Encyclopedia of Integer Sequences,
\url{https://oeis.org}.

\bibitem{SpiegelhoferWallner2017}
Spiegelhofer, L., \& Wallner, M. (2017). An explicit generating function arising in counting binomial coefficients divisible by powers of primes. \textit{Acta Arithmetica}, \textbf{181}, 27--55.
\end{thebibliography}
\end{document}